\documentclass[11pt]{article}

\usepackage[margin=1in]{geometry}
\usepackage[T1]{fontenc}
\usepackage{lmodern}
\usepackage{amsmath,amssymb,amsthm,mathtools}
\usepackage{booktabs}
\usepackage{enumitem}
\usepackage{microtype}
\usepackage{xcolor}
\usepackage{hyperref}
\usepackage{array}
\usepackage{longtable}
\usepackage{mathrsfs}
\usepackage{aliascnt}
\usepackage[nameinlink,capitalize]{cleveref}

\hypersetup{
  colorlinks=true,
  linkcolor=blue!55!black,
  citecolor=green!45!black,
  urlcolor=blue!60!black
}

\newtheorem{theorem}{Theorem}[section]
\newaliascnt{lemma}{theorem}
\newtheorem{lemma}[lemma]{Lemma}
\aliascntresetthe{lemma}
\newaliascnt{corollary}{theorem}
\newtheorem{corollary}[corollary]{Corollary}
\aliascntresetthe{corollary}
\newaliascnt{proposition}{theorem}
\newtheorem{proposition}[proposition]{Proposition}
\aliascntresetthe{proposition}
\newaliascnt{claim}{theorem}

\aliascntresetthe{claim}
\newaliascnt{definition}{theorem}
\newtheorem{definition}[definition]{Definition}
\aliascntresetthe{definition}
\newaliascnt{remark}{theorem}

\aliascntresetthe{remark}
\newaliascnt{assumption}{theorem}

\aliascntresetthe{assumption}

\crefname{theorem}{Theorem}{Theorems}
\Crefname{theorem}{Theorem}{Theorems}
\crefname{lemma}{Lemma}{Lemmas}
\Crefname{lemma}{Lemma}{Lemmas}
\crefname{corollary}{Corollary}{Corollaries}
\Crefname{corollary}{Corollary}{Corollaries}
\crefname{proposition}{Proposition}{Propositions}
\Crefname{proposition}{Proposition}{Propositions}

\newcommand{\R}{\mathbb{R}}

\newcommand{\E}{\mathbb{E}}
\newcommand{\Prob}{\mathbb{P}}
\newcommand{\one}{\mathbf{1}}
\newcommand{\eps}{\varepsilon}
\newcommand{\cF}{\mathcal{F}}
\newcommand{\cG}{\mathcal{G}}
\newcommand{\cH}{\mathcal{H}}
\newcommand{\cU}{\mathcal{U}}
\newcommand{\cV}{\mathcal{V}}
\newcommand{\cX}{\mathcal{X}}
\newcommand{\cY}{\mathcal{Y}}

\newcommand{\Span}{\operatorname{span}}
\newcommand{\rank}{\operatorname{rank}}

\newcommand{\op}{\mathrm{op}}
\newcommand{\Gap}{\operatorname{Gap}}
\newcommand{\Gin}{\operatorname{Gin}}

\newcommand{\dd}{\,\mathrm{d}}

\newcommand{\authornames}{Wendao Wu, Cong Fang$^{\dagger}$}
\newcommand{\affiliation}{Peking University}
\newcommand{\contactemail}{colombo.2026@outlook.com}

\newcommand{\ResearchAgentSystem}{Colombo, our laboratory's internal auto-research system}

\title{\textbf{Randomized Matvec Lower Bounds for Simplex-Based Matrix Games}}
\author{\authornames\\[0.4em]
  \normalsize\affiliation\\[0.2em]
  \small\href{mailto:\contactemail}{\texttt{\contactemail}}}
\date{October 2026}
\hypersetup{pdftitle={Randomized Matvec Lower Bounds for Simplex-Based Matrix Games},pdfauthor={\authornames}}

\begin{document}
\maketitle

\begin{abstract}
We present a proof of randomized matrix-vector lower bounds for two normalized matrix-game geometries: a Euclidean ball against a simplex with row norms at most one, and two simplices with entries of absolute value at most one.  A query returns $(Ax,A^\top y)$ for arbitrary real vectors, and the algorithm must return a feasible pair whose full saddle-point gap is at most $\eps$, with probability at least $2/3$ on every admissible matrix.  Let $Q_{\rm rand}^{2,1}(m,n,\eps)$ and $Q_{\rm rand}^{1,1}(m,n,\eps)$ denote the corresponding worst-case query complexities.  For sufficiently small $\eps$, we obtain polynomial-dimensional hard instances with
\[
\begin{aligned}
 Q_{\rm rand}^{2,1}(m,n,\eps)
 &\ge \Omega\!\left(\frac{\eps^{-2/3}}{\log^2(1/\eps)\log\log(1/\eps)}\right),\\
 Q_{\rm rand}^{1,1}(m,n,\eps)
 &\ge \Omega\!\left(\frac{\eps^{-2/3}}{\log^{7/3}(1/\eps)\log\log(1/\eps)}\right).
\end{aligned}
\]
The hard dimensions have orders $\eps^{-2/3}$ and $\eps^{-2/3}/\log^{1/3}(1/\eps)$, respectively, and the bounds extend to larger dimensions.  Their accuracy exponents match the deterministic upper bounds of Karmarkar, O'Carroll, and Sidford up to logarithms \cite{KOS26}.  The proof extracts a fresh Gaussian core after adaptive two-sided queries and uses its smallest-singular-value uncertainty to force linear-system solve hardness.  Two residual-to-gap reductions transfer this hardness to the game geometries, with an additional logarithmic normalization loss only for simplex--simplex games.
\end{abstract}

\noindent\textbf{Keywords:} matrix games; randomized oracle complexity; matrix-vector products; Gaussian matrices; lower bounds.

\begin{samepage}
\noindent\textbf{AI Usage.}
Nearly the entire research pipeline for this paper was carried out by
\ResearchAgentSystem{}, powered by GPT-5.6 Sol. The system also conducted a
Lean-backed article audit of the resulting manuscript. The authors subsequently reviewed and approved the
mathematical claims, presentation, and formal artifacts, and take
responsibility for the final manuscript. The complete Lean audit report and the system's technical report will be made public at a later date.
\par
\end{samepage}

\paragraph{Publication Intent and Recognition of Independent Work.}

This preprint is released as a research output of our study of
automated scientific research, for public scrutiny and discussion.
As we do not intend to submit this manuscript to a journal or
conference, it has not undergone the full editorial refinement
and internal review we would normally require for submission.
The authors remain responsible for its contents and for correcting
any errors or inaccurate attribution.

We recognize that releasing these results may affect researchers
who have independently been pursuing the same questions, including
work that is not yet public.
If this applies to your work, we invite you to contact us at
\texttt{colombo.2026@outlook.com}.
A brief description of the overlap, together with any dated
materials you are comfortable sharing, would help us understand
its scope and chronology.
Such materials might include draft excerpts, presentation slides,
or revision records; a complete manuscript or unpublished proof
is not required to begin the conversation.

We will review these cases carefully and update citations,
acknowledgments, and accounts of independent or concurrent work
as appropriate, including revising priority claims where warranted.
Nonpublic materials will be treated confidentially and used only
to assess the overlap unless their authors authorize otherwise.
We will obtain explicit permission from the authors before
publicly discussing such materials or providing them to an
automated research system.

\section{Introduction}\label{sec:introduction}
We study the number of matrix-vector products needed to compute an approximate equilibrium of
\[
  \min_{x\in\cX}\max_{y\in\cY}y^\top Ax,
  \qquad A\in\R^{m\times n}.
\]
Here $\cY=\Delta_m$ is the probability simplex, and $\cX$ is either the Euclidean unit ball $\mathbb B_2^n$ or the probability simplex $\Delta_n$.  These are the $\ell_2$--$\ell_1$ and $\ell_1$--$\ell_1$ geometries, respectively.  In the first geometry every row of $A$ has Euclidean norm at most one; in the second, every entry has absolute value at most one.  For an accuracy $\eps>0$, the algorithm must return a feasible pair $(x,y)$ with full saddle-point gap
\begin{equation}\label{eq:intro-target}
  \max_{y'\in\cY}(y')^\top Ax
  -\min_{x'\in\cX}y^\top Ax'
  \le\eps.
\end{equation}
One query returns $(Ax,A^\top y)$.  We count these two-sided queries, allow arbitrary real query vectors and adaptive randomization, and require success probability at least $2/3$ on every admissible matrix.  The returned strategies must belong to their domains, but need not have been queried.

\paragraph{The benchmark and the randomized question.}
Karmarkar, O'Carroll, and Sidford give deterministic algorithms using $\widetilde O(\eps^{-2/3})$ queries in both geometries \cite{KOS26}.  Their earlier bounds were $\widetilde O(\eps^{-7/9})$ for ball--simplex games and $\widetilde O(\eps^{-8/9})$ for simplex--simplex games \cite{KOS25}.  Kornowski and Shamir establish the corresponding $\widetilde\Omega(\eps^{-2/3})$ lower bounds for deterministic algorithms in sufficiently large dimensions, and identify randomized complexity as an open question \cite{KS26}.  Thus the remaining issue is whether randomization can improve the achievable accuracy exponent.  A deterministic resisting oracle does not settle this issue: the hard matrix must be fixed independently of a randomized algorithm's realized private seed.

\paragraph{The answer developed here.}
This proof gives randomized worst-case lower bounds with the same accuracy exponent.  Write $Q_{\rm rand}^{p,1}(m,n,\eps)$ for the minimum uniform query cap needed to satisfy \eqref{eq:intro-target} in geometry $p\in\{1,2\}$.  For all sufficiently small $\eps$, \Cref{cor:rates} gives
\begin{align*}
 Q_{\rm rand}^{2,1}(m,n,\eps)
 &\ge \Omega\!\left(\frac{\eps^{-2/3}}{\log^2(1/\eps)\log\log(1/\eps)}\right),\\
 Q_{\rm rand}^{1,1}(m,n,\eps)
 &\ge \Omega\!\left(\frac{\eps^{-2/3}}{\log^{7/3}(1/\eps)\log\log(1/\eps)}\right).
\end{align*}
The ball--simplex construction has $m=4N$ and $n=2N+1$ with $N=\Theta(\eps^{-2/3})$.  The simplex--simplex construction has $m=n=4N$ with $N=\Theta(\eps^{-2/3}/\log^{1/3}(1/\eps))$.  Each bound extends to all larger dimensions by \Cref{lem:dimension-extension}.  Consequently, randomization cannot improve the $2/3$ exponent in these regimes if the proof is validated; the upper bounds and their exponent are inherited from \cite{KOS26}.  We do not claim matching logarithms or a joint characterization of dimension and accuracy.

\begin{table}[t]
\centering
\small
\setlength{\tabcolsep}{4pt}
\setlength{\belowcaptionskip}{8pt}
\caption{Two-sided matvec complexity for full saddle-point gap at most $\eps$ under the stated matrix normalizations.  Tildes suppress logarithms in dimension and inverse accuracy.  In the prior lower-bound row, $T$ is the deterministic query budget and the listed dimension conditions are sufficient.  The present hard-core parameter $N$ is related to accuracy above; exact sizes and logarithmic losses appear in \Cref{thm:main-sequences} and \Cref{cor:rates}.}\label{tab:comparison}
\begin{tabular}{@{}>{\raggedright\arraybackslash}p{.22\textwidth}>{\raggedright\arraybackslash}p{.10\textwidth}>{\raggedright\arraybackslash}p{.16\textwidth}>{\raggedright\arraybackslash}p{.16\textwidth}>{\raggedright\arraybackslash}p{.25\textwidth}@{}}
\toprule
Work & Geometry & Upper bound & Lower bound & Algorithms and regime\\
\midrule
Karmarkar et al.\ \cite{KOS25} & $\ell_2$--$\ell_1$ & $\widetilde O(\eps^{-7/9})$ & --- & Deterministic; all dimensions\\[3pt]
 & $\ell_1$--$\ell_1$ & $\widetilde O(\eps^{-8/9})$ & --- & Deterministic; all dimensions\\[3pt]
Karmarkar et al.\ \cite{KOS26} & Both & $\widetilde O(\eps^{-2/3})$ & --- & Deterministic; all dimensions\\[3pt]
Kornowski--Shamir \cite{KS26} & Both & --- & $\widetilde\Omega(\eps^{-2/3})$ & Deterministic; $m\gtrsim T^2\log T$, $n\gtrsim T$\\
\midrule
\textbf{This Paper}\newline (\Cref{cor:rates}) & $\ell_2$--$\ell_1$ & --- & $\boldsymbol{\widetilde\Omega(\eps^{-2/3})}$ & Randomized; $m\ge4N$, $n\ge2N+1$\\[3pt]
\textbf{This Paper}\newline (\Cref{cor:rates}) & $\ell_1$--$\ell_1$ & --- & $\boldsymbol{\widetilde\Omega(\eps^{-2/3})}$ & Randomized; $m,n\ge4N$\\
\bottomrule
\end{tabular}
\end{table}

\paragraph{Why Gaussian uncertainty survives adaptivity.}
The proof begins with an information obstacle rather than a restricted iteration rule.  After adaptive queries to a square Gaussian matrix $G$, the unobserved part remains Gaussian in the orthogonal complements of the queried subspaces.  This posterior identity alone is insufficient: the observed blocks still affect the smallest singular value of the whole matrix.  \Cref{lem:gaussian-core} resolves that obstruction by appending queries after the estimator has returned its output.  They create an invertible square pivot.  The resulting Schur complement is a fresh Gaussian matrix plus a low-rank shift; a kernel restriction removes the shift while preserving a fresh square Gaussian core.  The smallest singular value of $G$ is bounded above by that of the core.  A uniform hard-edge estimate then forces a positive conditional probability of a small singular value for almost every transcript, ruling out highly reliable constant-factor estimation in \Cref{thm:spectral-estimation}.

\paragraph{From spectral information to the game gap.}
Fresh-block arguments and reductions between spectral estimation and solving have a close predecessor in the Wishart-based lower bounds of Braverman, Hazan, Simchowitz, and Woodworth \cite{BHSW20}.  Here the spectral argument treats direct access to both $Gv$ and $G^\top u$, which is not the same information as multiplication by the positive semidefinite matrix $G^\top G$.  A hypothetical fast constant-residual solver for a symmetric indefinite dilation of $G$ would implement approximate inverse power.  Verified refinement controls errors even when successive right-hand sides are adaptive, giving \Cref{thm:solve-lb}.  The two game reductions then turn a small full gap into a small linear-system residual.  For ball--simplex games, a homogeneous coordinate must stay positive at every sufficiently accurate output, allowing safe recovery of the solution.  For simplex--simplex games, a signed orthogonal dictionary detects the Euclidean residual in every direction; random rotation controls its entries at the cost of a $\sqrt{\log N}$ normalization factor.  These steps prove the two parts of \Cref{thm:main-sequences}.

\paragraph{Organization.}
\Cref{sec:related} places the argument among first-order game methods and randomized linear-algebra lower bounds.  \Cref{sec:model} gives the precise models and main statements.  The probabilistic inputs, adaptive spectral argument, and source solve lower bound appear in \Cref{sec:inputs,sec:gaussian-core,sec:solve}.  \Cref{sec:embedding} proves the game reductions, and \Cref{sec:scope} discusses the scope of the result.

\section{Related Work}\label{sec:related}
\paragraph{First-order methods and matrix-vector complexity.}
Nemirovski's prox method for monotone variational inequalities gives an $O(1/T)$ convergence framework for smooth convex--concave saddle problems \cite{Nemirovski04}.  Nesterov's smoothing method provides another route to the same inverse-accuracy order for structured nonsmooth problems, including max representations arising in matrix games \cite{Nesterov05}.  These methods explain the classical $\widetilde O(\eps^{-1})$ benchmark under the normalizations considered here.  Karmarkar, O'Carroll, and Sidford improve the matvec count below that order by exploiting the linear structure of games \cite{KOS25,KOS26}.  Their near-optimal result combines a prox multi-point outer loop with maintained matrix approximations and an amortized analysis of the queries used by regularized subproblem solvers \cite{KOS26}.  Our contribution concerns the necessity of the resulting accuracy exponent; it does not supply a new game algorithm or improve those upper bounds.

\paragraph{Deterministic and randomized game lower bounds.}
Kornowski and Shamir analyze simplex-based games under first-order information, distinguishing one-sided from two-sided matrix-vector access \cite{KS26}.  Their deterministic lower bounds use resisting constructions and cover a broader family of $\ell_p$--simplex geometries.  Their discussion of randomization identifies a specific obstruction: the construction depends on simulating the algorithm's responses, while known nonlinear modifications for randomized lower bounds do not preserve a bilinear objective.  We address the two endpoint geometries $p=1,2$ through Gaussian posterior uncertainty and a full-gap-to-residual implication.  The hard source matrix is drawn independently of the algorithm's seed, and the final contradiction yields a fixed worst-case game instance for each randomized algorithm.  A universal distribution over games is a different conclusion and is not constructed here.

\paragraph{Adaptive linear algebra and the Gaussian hard edge.}
Braverman et al.\ prove randomized gradient-query lower bounds for linear regression using spectral estimation of Wishart matrices \cite{BHSW20}.  Their posterior decomposition and surviving Wishart block show why spectral uncertainty can persist beyond Krylov-restricted algorithms.  Our Gaussian posterior calculation follows the same deferred-information principle, but direct two-sided access to a nonsymmetric Gaussian matrix requires the square-pivot and low-rank-shift argument in \Cref{lem:gaussian-core}.  We import the finite-dimensional smallest-singular-value estimate of Huang and Tikhomirov \cite{HuangTikhomirov}, specialized to the first power of a Gaussian matrix, and a standard Gaussian operator-norm tail \cite{Vershynin}.  These distributional facts supply the separated spectral events and the bounded-condition-number source class; they do not themselves establish adaptive query hardness.

\section{Models and Main Results}\label{sec:model}

\subsection{Two matrix-game models}
For $k\ge1$, let
\[
  \Delta_k:=\{z\in\R^k_{\ge0}:\one^\top z=1\},
  \qquad
  \mathbb B_2^k:=\{z\in\R^k:\|z\|_2\le1\}.
\]
For compact convex sets $\cX\subseteq\R^n$, $\cY\subseteq\R^m$, a payoff matrix $A\in\R^{m\times n}$, and $(x,y)\in\cX\times\cY$, define
\begin{equation}\label{eq:general-gap}
  \Gap_A^{\cX,\cY}(x,y)
  :=\max_{y'\in\cY}(y')^\top Ax
    -\min_{x'\in\cX}y^\top Ax'.
\end{equation}
A two-sided matvec query at $(x,y)$ returns
\[
  \mathcal O_A(x,y):=(Ax,A^\top y).
\]
The lower bounds allow arbitrary real queries and require feasible outputs.  They also cover algorithms that query only within the strategy domains.

\paragraph{$\ell_1$--$\ell_1$ games.}
Here $\cX=\Delta_n$, $\cY=\Delta_m$, and $\|A\|_{\max}\le1$.  The full gap is
\begin{equation}\label{eq:l11-gap}
  \Gap_A^{1,1}(x,y)
  =\max_{i\in[m]}(Ax)_i-\min_{j\in[n]}(A^\top y)_j.
\end{equation}

\paragraph{$\ell_2$--$\ell_1$ games.}
Here $\cX=\mathbb B_2^n$, $\cY=\Delta_m$, and
\[
  \|A\|_{2\to\infty}
  :=\max_{i\in[m]}\|A_{i,:}\|_2\le1.
\]
The full gap is
\begin{equation}\label{eq:l21-gap}
  \Gap_A^{2,1}(x,y)
  =\max_{i\in[m]}(Ax)_i+\|A^\top y\|_2.
\end{equation}

\begin{definition}[Randomized worst-case query complexity]
Fix success probability $2/3$.  Let $Q_{\rm rand}^{1,1}(m,n,\eps)$ and $Q_{\rm rand}^{2,1}(m,n,\eps)$ denote the minimum numbers of two-sided queries required by randomized adaptive algorithms that solve, respectively, every admissible $\ell_1$--$\ell_1$ or $\ell_2$--$\ell_1$ instance of dimensions $(m,n)$ to full gap at most $\eps$ with probability at least $2/3$.
\end{definition}

We write $\log$ for the natural logarithm; asymptotic constants are universal.  The notation $\widetilde O$ and $\widetilde\Omega$ suppresses only logarithms in dimension and inverse accuracy.  Throughout, ``uses at most $T$ queries'' means an almost-sure hard cap on every oracle transcript, including transcripts outside a promised input class.  This convention loses no generality: a promise algorithm can always be stopped after $T$ queries and made to return a fixed default output.  Correctness is required only on admissible or promised inputs.

\subsection{Main sequence theorem}

\begin{theorem}[Randomized lower bounds in both matrix-game geometries]\label{thm:main-sequences}
For universal constants $c_0,c_1,c_2>0$ and every sufficiently large integer $N$, the following statements hold.
\begin{enumerate}[label=(\alph*)]
  \item \textbf{$\ell_2$--$\ell_1$.}  Let
  \[
    m_N^{(2,1)}=4N,\qquad
    n_N^{(2,1)}=2N+1,\qquad
    \eps_N^{(2,1)}=\frac{c_0}{N^{3/2}}.
  \]
  Every randomized algorithm that returns full gap at most $\eps_N^{(2,1)}$ with probability at least $2/3$ on every admissible $\ell_2$--$\ell_1$ matrix of these dimensions must use at least
  \[
    c_2\frac{N}{\log^2(N+2)\,\log\log(N+e^e)}
  \]
  two-sided matvec queries.

  \item \textbf{$\ell_1$--$\ell_1$.}  Let
  \[
    m_N^{(1,1)}=n_N^{(1,1)}=4N,\qquad
    \eps_N^{(1,1)}=\frac{c_1}{N^{3/2}\sqrt{\log(N+2)}}.
  \]
  Every randomized algorithm that returns full gap at most $\eps_N^{(1,1)}$ with probability at least $2/3$ on every admissible $\ell_1$--$\ell_1$ matrix of these dimensions must satisfy the same lower bound on two-sided queries.
\end{enumerate}
\end{theorem}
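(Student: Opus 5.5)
The proof will be by reduction from the linear-system solve lower bound (\Cref{thm:solve-lb}), which itself rests on the spectral-estimation lower bound (\Cref{thm:spectral-estimation}) for a square Gaussian $G\in\R^{N\times N}$ accessed via two-sided products. As a source class I will take Gaussian matrices conditioned on $\|G\|_{\op}\le C\sqrt N$ together with a hard-edge window for $s_{\min}(G)$ of order $N^{-1/2}$; the Huang--Tikhomirov estimate \cite{HuangTikhomirov} and the Vershynin operator-norm tail \cite{Vershynin} make this class carry constant probability. Rescaling by $\sqrt N$ gives condition number $O(N)$. \Cref{thm:solve-lb} then says that any randomized algorithm producing a constant-relative-residual solution of the symmetric indefinite dilation $\begin{pmatrix}0&G\\G^\top&0\end{pmatrix}$ with probability $2/3$ needs $\Omega(N/(\log^2 N\log\log N))$ two-sided queries. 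The logarithms come from verified inverse-power refinement (about $\log N$ rounds, each with success boosting) and from a union bound over the adaptive right-hand sides. Given this, each part of \Cref{thm:main-sequences} reduces to a single contradiction: a game solver using fewer queries would yield a residual solver that is too fast. One step must be checked carefully here. Each game query has to be simulable by $O(1)$ queries to $G$, and the game matrix must be fixed by $G$ and public randomness alone, never by the algorithm's seed.

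For part (a), I will embed the dilation $M$ (size $2N$) and a right-hand side $b$ into a $4N\times(2N+1)$ matrix $A=\alpha\begin{pmatrix}M&-b\\-M&b\end{pmatrix}$. The extra column is a homogeneous coordinate $t$, and the rows carry both signs. Then $\max_i(Ax)_i=\alpha\|Mx'-tb\|_\infty$, where $x=(x',t)\in\mathbb B_2^{2N+1}$. The normalization $\alpha\asymp 1/\sqrt N$ guarantees row norms at most one. The game value is $0$, attained at $x^*\propto(M^{-1}b,1)$. Because of the $s_{\min}$ window, $\|M^{-1}b\|$ is at most about $N$, so $t^*\gtrsim 1/N$.

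The key step is to show that gap $\le\eps$ forces $t\gtrsim 1/N$. To get this, I will use the $y$-side term $\|A^\top y\|_2\le\eps$. It certifies that any $x$ with small $t$ has a large $\ell_\infty$ residual: the simplex mass can be concentrated on rows aligned with a singular direction. Equivalently, I will write the duality certificate explicitly. With $t$ bounded below, the point $x'/t$ has residual $\|Mx'/t-b\|_2\le\sqrt{2N}\,\eps/(\alpha t)$. Setting $\eps=c_0N^{-3/2}$ makes this a constant fraction of $\|b\|$, which is exactly what the constant-residual solver needs. I expect this positivity-of-$t$ step, where the $N^{3/2}$ scaling is forced, to be the main obstacle.

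For part (b), I will first replace $M$ by $UMU^\top$ (and $b$ by $Ub$) using a random rotation $U$ that is public and independent of the algorithm. This keeps all entries $O(\sqrt{\log N/N})$ with high probability, which explains the $\sqrt{\log N}$ loss in $\eps$. Next I will pass to the simplex, $x=(x^+,x^-,\dots)\in\Delta_{4N}$, writing signed vectors as differences of simplex coordinates plus a slack coordinate for homogenization. On the other side I will test the residual against a signed orthogonal dictionary $\pm H$ (Hadamard-like rows), so that $\max_i(Ax)_i$ controls $\|H(Mz-tb)\|_\infty\ge\|Mz-tb\|_2/\sqrt N$. Then I will repeat the argument of part (a): a lower bound on the homogeneous mass followed by a residual bound. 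The hard instance for a given algorithm is obtained by averaging. If the algorithm succeeded with probability $2/3$ on every admissible matrix, it would also succeed on average over the source class, contradicting \Cref{thm:solve-lb}. Hence some fixed matrix defeats it.
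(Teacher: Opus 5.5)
Your part (a) embedding has a trivial exact equilibrium, so it cannot work as stated. Take $x=0\in\mathbb B_2^{2N+1}$, and let $y$ put mass $1/2$ on each of the two rows $\pm(M_{i,:},-b_i)$ for some $i$. Then $\max_i(Ax)_i=0$ and $A^\top y=0$, so the full gap is exactly $0$. An algorithm can output this pair with no queries, and $t=0$ reveals nothing about $M^{-1}b$. The step you flag as the main obstacle ("gap $\le\eps$ forces $t\gtrsim1/N$") is therefore false, and the mechanism you propose for it is backwards. The full gap is primal suboptimality of $x$ plus dual suboptimality of $y$, and $\|A^\top y\|_2$ depends on $y$ alone. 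Since $\alpha\|Mx'-tb\|_\infty\ge0$ vanishes at $x=0$, the origin is an optimal primal strategy.

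The missing idea is a margin that rewards the homogeneous coordinate. The paper adds $\alpha e_{d+1}^\top$ to every row of $C$ and plays $A^{(2,1)}=-C$, so the minimizer's objective is $-\psi(z,s)$ with $\psi(z,s)=\alpha s-\beta\|Hz-sb\|_\infty$. The normalized solution $(s_*H^{-1}b,s_*)$ makes the value at most $-\alpha/\sqrt2<0$, so the origin is $\alpha/\sqrt2$-suboptimal. Gap $\le\alpha/16$ then forces $\psi\ge\alpha/\sqrt2-\alpha/16$, hence $s\ge1/\sqrt2-1/16$ and $\beta\|Hz-sb\|_\infty=O(\alpha)$.

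Your scaling also does not close. With your $\alpha\asymp N^{-1/2}$, $t\gtrsim1/N$ and $\eps=c_0N^{-3/2}$, your own bound $\sqrt{2N}\,\eps/(\alpha t)$ only controls the residual by $O(\sqrt N)$, not by a constant. The paper normalizes $H\in\mathcal H_{2N,CN}$ so that $\|H^{-1}b\|_2\le1$ and $s_*\ge1/\sqrt2$. The row bound costs $\beta=1/(4\kappa)\asymp1/N$. The margin $\alpha=\beta/(8\sqrt d)\asymp N^{-3/2}$ is just small enough that $\sqrt d\,\alpha/\beta=O(1)$ turns the $\ell_\infty$ residual into a constant $\ell_2$ residual, and this is where $\eps\asymp N^{-3/2}$ comes from.

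Part (b) inherits the same defect if, as your "lower bound on the homogeneous mass" suggests, a slack coordinate multiplies $b$. Then $x^+=x^-$ with zero slack mass, together with a sign-symmetric $y$, again gives zero gap. The paper needs no such lower bound, because on the simplex $\one^\top x=1$ is itself the homogeneous coordinate. With $V=\sqrt d\,[I,-I]$, one has $A^{(1,1)}x=\Lambda^{-1}R^\top(HVx-b)$ for every $x\in\Delta_{2d}$. The convex hull of the columns of $V$ is the $\ell_1$ ball of radius $\sqrt d$, which contains $H^{-1}b$ and so yields $x_*$ with $A^{(1,1)}x_*=0$. Hence $\min_j((A^{(1,1)})^\top y)_j\le0$, and the gap is at least $\Lambda^{-1}\|HVx-b\|_2$. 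Rotating only the testing basis $Q$ already gives the entry bound via a one-dimensional Haar tail. Your two-sided rotation $UMU^\top$ is unnecessary and turns the entries into bilinear forms in dependent random vectors.
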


\begin{corollary}[High-dimensional randomized exponents]\label{cor:rates}
For all sufficiently small $\eps$, choose the corresponding hard-core size $N$ from \Cref{thm:main-sequences}.  In the $\ell_2$--$\ell_1$ case, $N=\Theta(\eps^{-2/3})$ and the bound holds for $m\ge4N$, $n\ge2N+1$.  In the $\ell_1$--$\ell_1$ case, $N=\Theta(\eps^{-2/3}/\log^{1/3}(1/\eps))$ and it holds for $m,n\ge4N$.  In these respective regimes,
\begin{align}
  Q_{\rm rand}^{2,1}(m,n,\eps)
  &\ge
  \Omega\!\left(
    \frac{\eps^{-2/3}}
    {\log^2(1/\eps)\,\log\log(1/\eps)}
  \right),\label{eq:l21-rate}\\
  Q_{\rm rand}^{1,1}(m,n,\eps)
  &\ge
  \Omega\!\left(
    \frac{\eps^{-2/3}}
    {\log^{7/3}(1/\eps)\,\log\log(1/\eps)}
  \right).\label{eq:l11-rate}
\end{align}
\end{corollary}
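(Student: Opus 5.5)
The plan is to derive \Cref{cor:rates} directly from \Cref{thm:main-sequences} by inverting the accuracy--size relation and then padding dimensions with \Cref{lem:dimension-extension}.

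\emph{Choice of $N$.} Given a small $\eps$, in the $\ell_2$--$\ell_1$ case take $N$ to be the largest integer with $\eps_N^{(2,1)}=c_0N^{-3/2}\ge\eps$, that is, $N=\lfloor (c_0/\eps)^{2/3}\rfloor$. Then $N=\Theta(\eps^{-2/3})$, and $N\to\infty$ as $\eps\to0$, so $N$ is eventually above the threshold of \Cref{thm:main-sequences}.

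Because the target set is monotone in the accuracy, any algorithm reaching gap at most $\eps$ also reaches gap at most $\eps_N^{(2,1)}\ge\eps$. Part (a) therefore gives
\[
Q_{\rm rand}^{2,1}(4N,2N+1,\eps)\ge c_2\frac{N}{\log^2(N+2)\log\log(N+e^e)}.
\]
Since $\log N=\Theta(\log(1/\eps))$ and $\log\log N=\Theta(\log\log(1/\eps))$, this is $\Omega(\eps^{-2/3}/(\log^2(1/\eps)\log\log(1/\eps)))$.

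\emph{The $\ell_1$--$\ell_1$ case.} Take $N$ maximal with $c_1N^{-3/2}(\log(N+2))^{-1/2}\ge\eps$. The map $N\mapsto N^{3/2}\sqrt{\log(N+2)}$ is increasing and changes by a constant factor between consecutive $N$, so $N^{3/2}\sqrt{\log N}=\Theta(1/\eps)$. Taking logarithms gives $\log N=\Theta(\log(1/\eps))$, hence $N=\Theta(\eps^{-2/3}\log^{-1/3}(1/\eps))$.

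Substituting into part (b), the lower bound becomes
\[
N/(\log^2 N\log\log N)=\Omega\!\left(\eps^{-2/3}/(\log^{7/3}(1/\eps)\log\log(1/\eps))\right),
\]
which is \eqref{eq:l11-rate}.

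\emph{Larger dimensions.} Invoke \Cref{lem:dimension-extension} to pass from $(m_N,n_N)$ to any $m\ge m_N$, $n\ge n_N$. I expect it to be a padding argument: in the $\ell_1$--$\ell_1$ case, duplicate rows and columns; in the $\ell_2$--$\ell_1$ case, duplicate rows and append zero columns. Either way the game stays admissible, the oracle is simulable with no extra queries, and feasible outputs of the padded game map back to outputs of the small game with no larger full gap. Hence $Q_{\rm rand}$ is monotone in the dimensions.

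\emph{Main obstacle.} The only delicate point is to make sure the extension lemma preserves the gap in the simplex geometry when duplicated strategies are merged back (summing their mass), and that the hard-cap convention on queries is preserved under simulation. Everything else is routine asymptotic inversion, where one only needs to check that $\log$ and $\log\log$ of $N$ and of $1/\eps$ agree up to constants.
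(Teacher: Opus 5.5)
Your proposal is correct and follows essentially the same route as the paper: choosing the largest $N$ with $\eps_N\ge\eps$ (equivalently $\eps_{N+1}<\eps\le\eps_N$), transferring the bound of \Cref{thm:main-sequences} by monotonicity in the accuracy, and inverting $\eps_N$ using $\log N=\Theta(\log(1/\eps))$. Your final step of extending to all larger dimensions via \Cref{lem:dimension-extension} is also the one the paper uses.
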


\begin{lemma}[Extension from the hard core to larger dimensions]\label{lem:dimension-extension}
A hard $\ell_1$--$\ell_1$ instance of dimensions $(m_0,n_0)$ yields an equally hard instance for every $m\ge m_0$, $n\ge n_0$ by duplicating rows and columns.  A hard $\ell_2$--$\ell_1$ instance of dimensions $(m_0,n_0)$ yields an equally hard instance for every $m\ge m_0$, $n\ge n_0$ by duplicating rows and padding with zero columns.
\end{lemma}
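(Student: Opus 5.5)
We prove the lemma by a query-preserving simulation.  Suppose an algorithm $\mathcal B$ solves every admissible instance of dimensions $(m,n)$ to gap $\eps$ with probability at least $2/3$ using $T$ queries.  From it we build an algorithm $\mathcal A$ for dimensions $(m_0,n_0)$ with the same guarantee and the same query count.  Given a hard instance $A_0$, we construct a lifted matrix $A=LA_0R$.  Here $L\in\{0,1\}^{m\times m_0}$ duplicates rows: each row of $A$ is a copy of some row of $A_0$, every row of $A_0$ is used, and we fix the map $\sigma:[m]\to[m_0]$, for instance by letting the extra indices copy row $1$.  In the $\ell_1$--$\ell_1$ case, $R\in\{0,1\}^{n_0\times n}$ duplicates columns through a surjection $\tau:[n]\to[n_0]$.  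In the $\ell_2$--$\ell_1$ case, $R=[I_{n_0}\;0]$ pads with zero columns.  Admissibility is preserved: entries of $A$ are entries of $A_0$ or zero, and rows of $A$ are rows of $A_0$ padded by zeros, so $\|A\|_{\max}$ and $\|A\|_{2\to\infty}$ do not increase.

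\emph{Simulation.}  A query $(x,y)$ of $\mathcal B$ to $A$ is answered with one query to $A_0$, namely at $(Rx,L^\top y)$.  The oracle returns $(A_0Rx,A_0^\top L^\top y)$, and we output $Ax=L(A_0Rx)$ and $A^\top y=R^\top(A_0^\top L^\top y)$.  Since $L$ and $R$ are fixed and known, this costs exactly one query per query and is valid for arbitrary real vectors.  When $\mathcal B$ returns $(x,y)$, $\mathcal A$ returns $(x_0,y_0)=(Rx,L^\top y)$.  Feasibility holds because aggregating a simplex vector over the fibres of a surjection stays in the simplex, and truncating a vector to its first $n_0$ coordinates does not increase its Euclidean norm.

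\emph{Gap comparison.}  This is the only step needing care, and it must hold pointwise so that probabilities transfer.  The max term is unchanged: $\max_i(Ax)_i=\max_i(A_0x_0)_{\sigma(i)}=\max_k(A_0x_0)_k$, using surjectivity of $\sigma$.  For the min term in the $\ell_1$--$\ell_1$ case, $A^\top y=R^\top A_0^\top y_0$, so its coordinates are $(A_0^\top y_0)_{\tau(j)}$ and $\min_j(A^\top y)_j=\min_k(A_0^\top y_0)_k$.  In the $\ell_2$--$\ell_1$ case, $A^\top y=(A_0^\top y_0,0)$, so $\|A^\top y\|_2=\|A_0^\top y_0\|_2$.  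Hence $\Gap_{A_0}(x_0,y_0)=\Gap_A(x,y)$ exactly.  Therefore, on every admissible $A_0$, $\mathcal A$ succeeds with probability at least $2/3$ using $T$ queries.  Since the construction is uniform in $A_0$, any query lower bound at dimensions $(m_0,n_0)$ transfers to $(m,n)$.

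I expect no real obstacle.  The only points needing care are that the surjections make the extremal coordinates coincide, and that the $\ell_2$ case must pad with zero columns rather than duplicate them.  Duplicating columns would change the gap, because the norms of $A^\top y$ and of $x$ would not match.
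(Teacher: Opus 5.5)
Your proof is correct and follows the paper's route. You lift $A_0$ by row duplication together with column duplication (or zero-column padding in the $\ell_2$--$\ell_1$ case), answer each query with one query to $A_0$ by aggregating along the fibres, and map the output back by the same aggregation or truncation, which preserves feasibility, admissibility, and both gap terms exactly; writing the lift as $A=LA_0R$ simply makes the paper's fibre-aggregation argument explicit.
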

\begin{proof}
For $\ell_1$--$\ell_1$, choose surjections $\rho:[m]\to[m_0]$ and $\chi:[n]\to[n_0]$ and set $A'_{ij}=A_{\rho(i),\chi(j)}$.  Any query to $A'$ is simulated by aggregating coordinates along the fibers of $\chi$ and $\rho$, making one query to $A$, and duplicating the response.  Aggregating any simplex output along the same fibers preserves both terms in \eqref{eq:l11-gap}, hence preserves the full gap.

For $\ell_2$--$\ell_1$, first duplicate rows and then set $A'=[\widetilde A,0]$, where $\widetilde A$ is the row-duplicated matrix.  A query $x=(x_0,x_{\rm extra})$ only uses $x_0$; a left query is aggregated over duplicated rows.  If $x\in\mathbb B_2^n$, then $x_0\in\mathbb B_2^{n_0}$, and
\[
  \max_i(A'x)_i=\max_i(Ax_0)_i,
  \qquad
  \|(A')^\top y\|_2=\|A^\top\bar y\|_2.
\]
Thus \eqref{eq:l21-gap} is preserved.
\end{proof}

\section{Probabilistic Preliminaries}\label{sec:inputs}
Only two external distributional facts are imported.  The elementary Haar projection estimate used later is proved below.

\begin{theorem}[Finite-dimensional Gaussian hard edge \cite{HuangTikhomirov}]\label{thm:hard-edge-input}
There are universal constants $c_{\rm he},C_{\rm he}>0$ such that, for every integer $k\ge 1$, every $s\in(0,1]$, and $Z_k\sim\Gin(k)$,
\begin{equation}\label{eq:hard-edge-input}
  c_{\rm he}s
  \le
  \Prob\!\left(\sigma_{\min}(Z_k)\le\frac{s}{\sqrt{k}}\right)
  \le
  C_{\rm he}s.
\end{equation}
\end{theorem}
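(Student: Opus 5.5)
The statement is an imported result. The plan is to derive it from the Huang--Tikhomirov smallest-singular-value estimate for products of Gaussian matrices, specialized to one factor, rather than prove it from scratch. A self-contained route for the single-matrix case also exists via Edelman's exact law. For $Z_k\sim\Gin(k)$ with i.i.d.\ $N(0,1)$ entries, Edelman showed that $k\,\sigma_{\min}(Z_k)^2$ has a density $f_k$ on $(0,\infty)$ that converges as $k\to\infty$ to
\[
f_\infty(t)=\frac{1+\sqrt t}{2\sqrt t}\,e^{-t/2-\sqrt t}.
\]
Setting $t=s^2$ then gives
\[
\Prob\bigl(\sigma_{\min}(Z_k)\le s/\sqrt k\bigr)=\int_0^{s^2}f_k(t)\dd t ,
\]
and the integrand behaves like $t^{-1/2}$ near zero. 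This is the source of the linear dependence on $s$.

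For the \emph{upper bound} I would use the geometric argument of Rudelson--Vershynin, which is simple in the Gaussian case. Let $X_j$ be the columns of $Z_k$ and $H_j$ the span of the other columns. The negative second moment identity $\sum_j\sigma_j^{-2}=\sum_j\operatorname{dist}(X_j,H_j)^{-2}$ is one option, but the cleaner route is the "invertibility via distance" lemma:
\[
\Prob\bigl(\sigma_{\min}\le s/\sqrt k\bigr)\le \frac{C}{k}\sum_j\Prob\bigl(\operatorname{dist}(X_j,H_j)\le Cs\bigr)+(\text{compressible part}).
\]
By rotation invariance, $\operatorname{dist}(X_j,H_j)$ is distributed as $|g|$ with $g\sim N(0,1)$, so each probability is $O(s)$. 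The compressible vectors are handled by a net argument whose failure probability is $e^{-ck}$. That term is at most $Cs$ except when $s<e^{-ck}$, and the final step is there to cover this case.

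For very small $s$, I would instead use Edelman's exact finite-$k$ density bound $f_k(t)\le C t^{-1/2}$, uniformly in $k$. This bound is the main technical obstacle if one avoids citing it. The fallback is to cite the finite-$k$ formula, in which $f_k$ is expressed through a confluent hypergeometric function that is bounded near zero, uniformly in $k$, after multiplication by $\sqrt t$.

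For the \emph{lower bound}, I would use the same density representation to get $f_k(t)\ge c\,t^{-1/2}$ on $(0,1]$, uniformly in $k$. Small $k$ is checked directly, for instance $k=1$ gives $|g|\le s$ with probability $\asymp s$; large $k$ follows by convergence to $f_\infty$, which satisfies $f_\infty(t)\ge c\,t^{-1/2}$ on $(0,1]$. Integrating over $(0,s^2]$ then yields $c_{\rm he}s$. Uniformity of the constants in $k$ is the delicate point throughout, and this is exactly what the cited finite-dimensional theorem supplies. In the paper I would therefore simply invoke \cite{HuangTikhomirov} with the number of factors equal to one, and note that the constants are universal.
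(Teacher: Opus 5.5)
Your plan matches the paper's proof, which simply specializes the cited Huang--Tikhomirov estimate to exponent one. Your Edelman and Rudelson--Vershynin sketch is supplementary only; as you note, its uniformity in $k$ and as $s\to0$ is exactly what the citation supplies. One small correction: that reference concerns powers $G^m$ of a single Gaussian matrix, not products of independent Gaussian matrices, though this makes no difference at $m=1$.
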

Here $\Gin(k)$ denotes the $k\times k$ real matrix with independent $N(0,1)$ entries.  Equation \eqref{eq:hard-edge-input} is the specialization of the power estimate in \cite{HuangTikhomirov} to power exponent one; the symbol $k$ in the present manuscript denotes matrix dimension.

\begin{proposition}[Gaussian operator norm tail]\label{prop:gaussian-opnorm}
There are universal constants $C_{\rm op},c_{\rm op}>0$ such that, for every $N$ and $G\sim\Gin(N)$,
\[
  \Prob\bigl(\|G\|_{\op}>C_{\rm op}\sqrt N+t\bigr)
  \le 2e^{-c_{\rm op}t^2}
  \qquad(t\ge 0).
\]
In particular, for every fixed $\delta>0$, one can choose a universal $C(\delta)$ such that
$\Prob(\|G\|_{\op}>C(\delta)\sqrt N)\le\delta$ for all $N$.
\end{proposition}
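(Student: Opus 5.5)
The plan is to prove the operator-norm tail in two stages: first control $\E\|G\|_{\op}$ by $C_{\rm op}\sqrt N$, and then obtain the deviation bound from Gaussian concentration of Lipschitz functions. This also gives the ``in particular'' statement.

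\emph{Expectation bound.} I would use an $\eta$-net argument with $\eta=1/4$. Let $\mathcal N$ be a $1/4$-net of the unit sphere $S^{N-1}$ with $|\mathcal N|\le 9^N$. The standard approximation argument gives
\[
\|G\|_{\op}\le 2\max_{u,v\in\mathcal N}u^\top G v .
\]
For fixed unit vectors $u,v$, the quantity $u^\top Gv$ is distributed as $N(0,1)$. A union bound over the at most $81^N$ pairs therefore gives
\[
\Prob\bigl(\|G\|_{\op}>2r\bigr)\le 81^N e^{-r^2/2}.
\]
Taking $r=C\sqrt N$ with $C$ large makes the right-hand side $e^{-cN}$. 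Integrating the tail then yields $\E\|G\|_{\op}\le C'\sqrt N$, with $C'$ universal. Alternatively, one could cite Gordon's inequality, which gives $\E\|G\|_{\op}\le 2\sqrt N$ directly.

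\emph{Concentration.} The map $G\mapsto\|G\|_{\op}$ is $1$-Lipschitz with respect to the Frobenius norm, because $\bigl|\|G\|_{\op}-\|H\|_{\op}\bigr|\le\|G-H\|_{\op}\le\|G-H\|_F$. Gaussian concentration for Lipschitz functions of a standard Gaussian vector in $\R^{N^2}$ (Tsirelson--Ibragimov--Sudakov) then gives
\[
\Prob\bigl(\|G\|_{\op}>\E\|G\|_{\op}+t\bigr)\le e^{-t^2/2}.
\]
Setting $C_{\rm op}=C'$ and $c_{\rm op}=1/2$ gives the displayed bound; the factor $2$ in the statement is just slack.

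\emph{The ``in particular'' statement.} Given $\delta>0$, apply the tail bound with $t=\sqrt{\log(2/\delta)/c_{\rm op}}$. Since $\sqrt N\ge1$, we have $t\le t\sqrt N$, so the event $\|G\|_{\op}>(C_{\rm op}+t)\sqrt N$ is contained in $\|G\|_{\op}>C_{\rm op}\sqrt N+t$. Its probability is therefore at most $\delta$, and we may take $C(\delta)=C_{\rm op}+t$.

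None of these steps is a real obstacle; this is the standard result as stated in \cite{Vershynin}. The only point requiring care is keeping the constants universal, in particular verifying that the net cardinality bound $9^N$ is absorbed by the choice of $C$ uniformly in $N$.
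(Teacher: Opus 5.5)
Your proposal is correct and takes essentially the same route as the paper: the paper gives no proof of its own and cites the standard bound in \cite{Vershynin}, and your argument (a net bound for $\E\|G\|_{\op}$, then Gaussian Lipschitz concentration, then the $\sqrt N\ge1$ containment for the ``in particular'' part) is exactly that textbook proof. A minor remark: the union bound over the $81^N$ net pairs already gives $\Prob(\|G\|_{\op}>2C\sqrt N+t)\le e^{-t^2/8}$ directly once $C^2\ge2\log 81$, so the concentration step is convenient but not strictly needed.
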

This is the standard norm bound for a square Gaussian matrix; see, for example, \cite{Vershynin}.

\begin{lemma}[A random start sees any fixed direction]\label{lem:random-start}
There are universal constants $\nu,p_\nu>0$ such that, for every dimension $d\ge 2$, every fixed unit vector $v\in\R^d$, and a Haar-uniform unit vector $g\in\mathbb S^{d-1}$,
\[
  \Prob\!\left( |\langle g,v\rangle|\ge\frac{\nu}{\sqrt d}\right)
  \ge p_\nu.
\]
\end{lemma}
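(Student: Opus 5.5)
By rotation invariance of the Haar measure, we may take $v=e_1$. The quantity $\langle g,v\rangle$ is then the first coordinate $g_1$ of a uniform point on $\mathbb S^{d-1}$. We represent $g=h/\|h\|_2$ with $h\sim N(0,I_d)$, so that
\[
  |\langle g,v\rangle| = \frac{|h_1|}{\|h\|_2}.
\]
We will bound the numerator from below and the denominator from above, each on an event of constant probability, and intersect the two events.

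For the numerator, $h_1\sim N(0,1)$, so $\Prob(|h_1|\ge 1/2)=:q$ is a universal constant; numerically $q\approx 0.617$. For the denominator, $\E\|h\|_2^2=d$, so Markov's inequality gives $\Prob(\|h\|_2^2>4d)\le 1/4$. On the intersection of the events $\{|h_1|\ge 1/2\}$ and $\{\|h\|_2\le 2\sqrt d\}$ we have
\[
  |\langle g,v\rangle|\ge \frac{1/2}{2\sqrt d}=\frac{1}{4\sqrt d}.
\]
A union bound gives
\[
  \Prob\!\left(|\langle g,v\rangle|\ge\frac{1}{4\sqrt d}\right)\ge q-\frac14>0.
\]
Hence we may take $\nu=1/4$ and $p_\nu=q-1/4$.

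The constants depend on neither $d$ nor $v$, which is exactly the uniformity the statement requires. The bound holds for every $d\ge1$, so the hypothesis $d\ge2$ is not needed.

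Nothing in this argument is delicate. The only point to check is that Markov's bound on $\|h\|_2^2$ is loose enough to leave positive probability after the union bound, and the numbers above confirm that it is ($q\approx0.617>1/4$). One could avoid the Gaussian representation and instead use the explicit density of $g_1$, proportional to $(1-t^2)^{(d-3)/2}$, but the normalized-Gaussian route is shorter and avoids computing that density's normalizing constant.
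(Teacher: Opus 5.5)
Your proof is correct and follows essentially the paper's route: represent $g$ as a normalized Gaussian, bound the norm with Markov's inequality, and lower-bound $|h_1|$ on a constant-probability event. The only difference is cosmetic. The paper applies Markov to $\sum_{j\ge2}z_j^2$ and multiplies probabilities using its independence from $z_1$, which gives $\nu=1/2$. You apply Markov to the full norm and use a union bound, which is valid since $q\approx0.617>1/4$.
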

\begin{proof}
By rotational invariance take $v=e_1$ and write $g=z/\|z\|_2$ for $z\sim N(0,I_d)$.  Let $S=\sum_{j=2}^d z_j^2$, independent of $z_1$.  Markov's inequality gives $\Prob(S\le2(d-1))\ge1/2$, while $\Prob(1\le|z_1|\le2)$ is a fixed positive constant.  On their intersection,
\[
  \|z\|_2^2\le4+2(d-1)\le4d,
  \qquad
  |g_1|\ge\frac1{2\sqrt d}.
\]
Thus one may take $\nu=1/2$ and $p_\nu=\tfrac12\Prob(1\le|N(0,1)|\le2)>0$.
\end{proof}

\begin{lemma}[Haar one-dimensional upper tail]\label{lem:haar-tail}
Let $q$ be Haar-uniform on $\mathbb S^{d-1}$, where $d\ge2$, and let $a\in\R^d$ be fixed.  For every $t\in[0,1]$,
\begin{equation}\label{eq:haar-tail}
  \Prob\bigl(|\langle q,a\rangle|>t\|a\|_2\bigr)
  \le 2\exp\!\left(-\frac{d-1}{2}t^2\right).
\end{equation}
For $t>1$ the probability is zero.
\end{lemma}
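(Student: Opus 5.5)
We prove \eqref{eq:haar-tail} by the standard route for Haar vectors: reduce to one coordinate, write the distribution of that coordinate exactly, and bound the resulting density integral.

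\textbf{Reduction.} If $a=0$ the event is empty, so we may assume $a\neq0$ and normalize $\|a\|_2=1$. By rotational invariance of the Haar measure we may then take $a=e_1$. The claim becomes a tail bound for the single coordinate $q_1$ of a uniform point on $\mathbb S^{d-1}$. For $t>1$ the probability is zero, since $|q_1|\le1$ always.

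\textbf{Case $d=2$.} Here $q_1=\cos\theta$ with $\theta$ uniform, and $e^{-t^2/2}\ge e^{-1/2}>1/2$ for $t\le1$. The right-hand side is therefore at least $1$, and the bound holds trivially.

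\textbf{Case $d\ge3$.} The density of $q_1$ on $[-1,1]$ is $c_d(1-s^2)^{(d-3)/2}$. We have
\[
\Prob(|q_1|>t)=\frac{\int_t^1(1-s^2)^{(d-3)/2}\,ds}{\int_0^1(1-s^2)^{(d-3)/2}\,ds}.
\]

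\emph{Numerator.} Substitute $s=t+u$, so that $1-s^2\le 1-t^2-2tu$. Then use $(1-x)^{k}\le e^{-kx}$ with $k=(d-3)/2$.

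\emph{Alternative route.} It may be cleaner to bound the area of the spherical cap directly. The cap $\{q_1>t\}$ has measure at most that of a ball of radius $\sqrt{1-t^2}$ relative to the sphere. The classical cap estimate gives $\sigma(\{q_1>t\})\le (1-t^2)^{(d-1)/2}\le e^{-(d-1)t^2/2}$, valid for $t\ge1/\sqrt2$ with that exact form.

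\emph{Safest uniform route.} Compare the two integrals through the ratio
\[
\frac{\int_t^1(1-s^2)^{k}\,ds}{\int_0^1(1-s^2)^{k}\,ds}.
\]
Rescale $s=t+(1-t)\,r$ in the numerator and $s=(1-t)\,r$ in the denominator. Pointwise,
\[
1-(t+(1-t)r)^2\le(1-t^2)\bigl(1-((1-t)r)^2\bigr),
\]
which after expansion reduces to an elementary inequality for $t,r\in[0,1]$. This yields
\[
\Prob(q_1>t)\le(1-t^2)^{k}\cdot\frac{1}{2}\le \tfrac12 e^{-(d-3)t^2/2}.
\]

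\textbf{Main obstacle.} The delicate point is obtaining exponent $(d-1)/2$ rather than $(d-3)/2$. The loss of one power of $1-t^2$ must be recovered from the Jacobian factor $(1-t)$ together with the $\tfrac12$ slack. A cleaner way to get exactly $(d-1)/2$ is the cap-to-cone comparison: $\{q_1>t\}$ is contained in the radial projection of a ball of radius $\sqrt{1-t^2}$, giving volume ratio $(1-t^2)^{d/2}$ of the unit ball. Combining this with the isotropy of the uniform ball measure gives the required constant.

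If the sharp constant resists, we would fall back on the Gaussian representation $q=z/\|z\|_2$. Write
\[
\Prob(|z_1|>t\|z\|)=\Prob\bigl(z_1^2(1-t^2)>t^2 S\bigr),\qquad S=\textstyle\sum_{j\ge2}z_j^2.
\]
Then compute the moment generating function conditionally on $S$:
\[
\Prob\le \E\bigl[2e^{-t^2S/(2(1-t^2))}\bigr]=2\,(1+t^2/(1-t^2))^{-(d-1)/2}=2(1-t^2)^{(d-1)/2}\le2e^{-(d-1)t^2/2}.
\]
This gives exactly the stated bound in one line, so it is the route we would actually commit to.
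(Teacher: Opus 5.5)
Your committed route is correct and is exactly the paper's proof: write $q=z/\|z\|_2$, bound $|z_1|$ conditionally on the independent $\chi^2_{d-1}$ variable $S$, and evaluate $\E e^{-t^2S/(2(1-t^2))}=(1-t^2)^{(d-1)/2}\le e^{-(d-1)t^2/2}$. The computation needs $t<1$, so dispatch the endpoint $t=1$, like $t>1$, by noting that the strict event $|q_1|>t$ is empty, as the paper does; the exploratory density and cap arguments can be dropped.
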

\begin{proof}
For $a=0$ the strict tail event is empty.  By rotational invariance, assume $a=\|a\|_2e_1$ and write $q=g/\|g\|_2$ with $g\sim N(0,I_d)$.  Put $S=\sum_{j=2}^d g_j^2$, which is independent of $g_1$.  If $|q_1|>t$ with $0\le t<1$, then
\[
  |g_1|\ge \frac{t}{\sqrt{1-t^2}}\sqrt S.
\]
Using the Gaussian tail bound conditionally on $S$ and then the Laplace transform of a $\chi^2_{d-1}$ variable,
\[
\begin{aligned}
  \Prob(|q_1|>t)
  &\le 2\E\exp\!\left(-\frac{t^2}{2(1-t^2)}S\right)\\
  &=2(1-t^2)^{(d-1)/2}
  \le 2\exp\!\left(-\frac{d-1}{2}t^2\right).
\end{aligned}
\]
For $t\ge1$ the strict tail event is empty.
\end{proof}

\section{Adaptive Gaussian Spectral Hardness}\label{sec:gaussian-core}

\subsection{Two-sided Gaussian oracle and filtration}
Let $G\sim\Gin(N)$.  A two-sided query at $(u_j,v_j)\in\R^N\times\R^N$ returns
\[
  (Gv_j,G^\top u_j).
\]
The query is allowed to be any measurable function of the algorithm's internal random seed and the preceding transcript.  We include the seed in $\cF_0$ and assume it is independent of $G$.  For integers $a,b\ge0$, let $\gamma_{a,b}$ denote the product standard-Gaussian probability measure on $\R^{a\times b}$, and write $\gamma_a:=\gamma_{a,a}$.  Thus $Z\sim\Gin(a,b)$ means $\mathcal L(Z)=\gamma_{a,b}$.  Let $\cF_j$ be the sigma-field generated by the seed and the first $j$ query--response pairs.  Define
\[
  \cU_j:=\Span\{u_1,\ldots,u_j\},
  \qquad
  \cV_j:=\Span\{v_1,\ldots,v_j\},
\]
and write $r_j=\dim\cU_j$ and $s_j=\dim\cV_j$.

\paragraph{Measurable-frame convention.}
Every orthonormal frame and orthogonal completion below is selected by one fixed deterministic rule: scan the given ordered vectors, followed when necessary by $e_1,\ldots,e_N$, retain a vector exactly when it increases the current span, and apply Gram--Schmidt with a fixed sign convention.  On every fixed-rank stratum this construction is Borel; since there are only finitely many strata, all frames used below are measurable.  The same rule selects measurable frames for matrix row spaces, images, kernels, and their extensions to prescribed dimensions.

\begin{lemma}[Adaptive Gaussian posterior]\label{lem:posterior}
For each $j$, there are $\cF_j$-measurable frames
\[
  Q_{\cU_j}\in\R^{N\times r_j},\quad
  Q_{\cU_j^\perp}\in\R^{N\times(N-r_j)},\quad
  Q_{\cV_j}\in\R^{N\times s_j},\quad
  Q_{\cV_j^\perp}\in\R^{N\times(N-s_j)}
\]
with orthonormal columns spanning the indicated spaces.  If
\[
  P_{\cU_j}:=Q_{\cU_j}Q_{\cU_j}^\top,
  \qquad
  P_{\cV_j}:=Q_{\cV_j}Q_{\cV_j}^\top,
\]
then
\begin{equation}\label{eq:posterior-exact-decomposition}
  G
  =G_j^{\rm obs}
   +Q_{\cU_j^\perp}Z_jQ_{\cV_j^\perp}^\top,
  \qquad
  G_j^{\rm obs}:=P_{\cU_j}G+P_{\cU_j^\perp}GP_{\cV_j},
\end{equation}
where $G_j^{\rm obs}$ is $\cF_j$-measurable and the \emph{actual hidden block}
\[
  Z_j:=Q_{\cU_j^\perp}^\top GQ_{\cV_j^\perp}
\]
satisfies the regular conditional law
\begin{equation}\label{eq:posterior-exact-law}
  \mathcal L(Z_j\mid\cF_j)
  =\gamma_{N-r_j,N-s_j}
  \qquad\text{almost surely}.
\end{equation}
\end{lemma}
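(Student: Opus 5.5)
The plan is induction on $j$, proving the slightly stronger statement that, conditionally on $\cF_j$, the hidden block $Z_j$ is standard Gaussian and independent of everything already revealed. First, the decomposition \eqref{eq:posterior-exact-decomposition} is pure linear algebra. Write $I=P_{\cU_j}+P_{\cU_j^\perp}$ on the left and $I=P_{\cV_j}+P_{\cV_j^\perp}$ on the right. Then
\[
G=P_{\cU_j}G+P_{\cU_j^\perp}GP_{\cV_j}+P_{\cU_j^\perp}GP_{\cV_j^\perp}.
\]
The last term equals $Q_{\cU_j^\perp}Z_jQ_{\cV_j^\perp}^\top$. Measurability of the frames follows from the measurable-frame convention, because $u_1,\dots,u_j$ and $v_1,\dots,v_j$ are $\cF_j$-measurable.

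To see that $G_j^{\rm obs}$ is $\cF_j$-measurable, note that $Q_{\cU_j}^\top G$ is determined by the responses $G^\top u_i$: each column of $Q_{\cU_j}$ is a fixed linear combination of the $u_i$ with $\cF_j$-measurable coefficients. Likewise $GQ_{\cV_j}$ is determined by the responses $Gv_i$. Hence both $P_{\cU_j}G$ and $P_{\cU_j^\perp}GP_{\cV_j}=P_{\cU_j^\perp}(GQ_{\cV_j})Q_{\cV_j}^\top$ are functions of the transcript.

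For the conditional law, I run an induction on $j$. The base case $j=0$ holds because the seed is independent of $G$ and $Z_0=G\sim\gamma_N$. For the inductive step, assume $\mathcal L(Z_{j-1}\mid\cF_{j-1})=\gamma_{N-r_{j-1},N-s_{j-1}}$. The new query $(u_j,v_j)$ is $\cF_{j-1}$-measurable (one may enlarge $\cF_{j-1}$ by independent auxiliary randomness if the query uses fresh randomness). Decompose $u_j$ and $v_j$ into components in $\cU_{j-1},\cU_{j-1}^\perp$ and $\cV_{j-1},\cV_{j-1}^\perp$. The new responses are then known functions of $G_{j-1}^{\rm obs}$ plus the linear functionals
\[
Z_{j-1}b \quad\text{and}\quad Z_{j-1}^\top a,\qquad a:=Q_{\cU_{j-1}^\perp}^\top u_j,\quad b:=Q_{\cV_{j-1}^\perp}^\top v_j .
\]
Conditionally on $\cF_{j-1}$, the vectors $a,b$ are fixed, and the matrix $Z_{j-1}$ is standard Gaussian on $\R^{(N-r_{j-1})\times(N-s_{j-1})}$. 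Complete $a/\|a\|$ and $b/\|b\|$ (when nonzero) to orthonormal bases, using the same deterministic rule, so that these bases agree with the frames $Q_{\cU_j^\perp},Q_{\cV_j^\perp}$ after composition. Write $Z_{j-1}=O_1\widetilde Z O_2^\top$ in the rotated bases; $\widetilde Z$ is again standard Gaussian because the product Gaussian measure is invariant under orthogonal maps on each side. The revealed quantities are exactly the first row and first column of $\widetilde Z$, and the remaining block is independent of them by independence of the entries. That remaining block equals $Z_j$ expressed in the new frames.

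Combining this with the fact that $\cF_j=\cF_{j-1}\vee\sigma(\text{new response})$ gives \eqref{eq:posterior-exact-law}. The cases $a=0$ or $b=0$, where the dimension does not increase, need no separate argument: nothing new about the hidden block is revealed in that direction.

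The main obstacle is bookkeeping rather than probability. I must check two things: that the frames selected by the fixed Gram--Schmidt rule at step $j$ coincide, as subspaces and up to an $\cF_{j}$-measurable orthogonal change of basis, with the rotated bases used in the conditioning step; and that a measurable orthogonal change of basis preserves $\gamma_{a,b}$ conditionally. I would handle both with a short lemma: if $\mathcal L(Z\mid\cH)=\gamma_{a,b}$ and $O_1,O_2$ are $\cH$-measurable orthogonal matrices, then $\mathcal L(O_1ZO_2\mid\cH)=\gamma_{a,b}$. This is proved by computing conditional characteristic functions. The finitely many rank strata are treated separately to keep all maps Borel.
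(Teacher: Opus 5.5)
Your proposal is correct and follows essentially the same route as the paper. Both arguments use induction on $j$, reduce the new response to $Z_{j-1}b$ and $Z_{j-1}^\top a$, and rotate by $\cF_{j-1}$-measurable orthogonal maps so that these become the first column and first row of a fresh standard Gaussian matrix. Both then use conditional orthogonal invariance to reconcile the remaining block with the conventionally selected complement frames; your explicit lemma for that last step simply spells out what the paper states in one sentence.
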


\begin{proof}
The transcript determines the linear maps $G|_{\cV_j}$ and $G^\top|_{\cU_j}$.  Consequently it determines $GP_{\cV_j}$ and $P_{\cU_j}G$, and hence $G_j^{\rm obs}$ is $\cF_j$-measurable.  The algebraic identity in \eqref{eq:posterior-exact-decomposition} follows from
\[
  G-P_{\cU_j}G-P_{\cU_j^\perp}GP_{\cV_j}
  =P_{\cU_j^\perp}GP_{\cV_j^\perp}.
\]
It remains to prove \eqref{eq:posterior-exact-law}.  This deferred-information argument is analogous to the adaptive posterior calculation for Wishart queries in \cite{BHSW20}; here we track separate left and right query spaces.

We argue by induction on $j$.  At $j=0$ the claim is immediate.  Assume it holds at $j-1$ and condition on $\cF_{j-1}$.  Put
\[
  \alpha:=Q_{\cU_{j-1}^\perp}^\top u_j,
  \qquad
  \beta:=Q_{\cV_{j-1}^\perp}^\top v_j.
\]
After subtracting the $\cF_{j-1}$-measurable contribution of $G_{j-1}^{\rm obs}$, the new response is equivalent to observing
\[
  Z_{j-1}\beta
  \qquad\text{and}\qquad
  Z_{j-1}^\top\alpha.
\]
Choose measurable orthogonal matrices $L,R$ that align each nonzero vector with the first coordinate:
\[
  L^\top\alpha=\|\alpha\|_2e_1,
  \qquad
  R^\top\beta=\|\beta\|_2e_1,
\]
and take the identity matrix on a zero vector.  Conditional on $\cF_{j-1}$,
\[
  \widetilde Z:=L^\top Z_{j-1}R
\]
is again a standard Gaussian matrix.  The new observations reveal precisely its first row when $\alpha\ne0$ and its first column when $\beta\ne0$ (with the common corner revealed only once).  All remaining entries form an independent standard Gaussian block.  Removing the revealed row and/or column and updating the two complement frames therefore gives \eqref{eq:posterior-exact-law} at time $j$.  If the prescribed frame convention selects different complement bases, their changes of coordinates are measurable with respect to the new transcript.  Conditional orthogonal invariance therefore preserves the standard Gaussian law in those bases.  All updates are measurable by the fixed frame convention.  This completes the induction.
\end{proof}

\subsection{Square-pivot augmentation and a fixed fresh Gaussian core}
The following argument appends analysis-only queries after the estimator has already produced its output.  Hence the output remains $\cF_t$-measurable.  The appended queries are used only to expose a convenient square pivot and to identify a residual Gaussian block.

\begin{lemma}[Adaptive Gaussian core]\label{lem:gaussian-core}
Let $t\in\{0,1,\ldots,\lfloor N/8\rfloor\}$, and suppose an adaptive algorithm makes at most $t$ two-sided queries to $G\sim\Gin(N)$.  Put
\[
  \ell:=N-4t.
\]
There exist an augmented sigma-field $\widetilde\cF_t\supseteq\cF_t$ and, on the same probability space, a random matrix $Z_\ell\in\R^{\ell\times\ell}$ such that
\begin{equation}\label{eq:conditional-core-law}
  \mathcal L(Z_\ell\mid\widetilde\cF_t)=\gamma_\ell
  \qquad\text{almost surely},
\end{equation}
and
\begin{equation}\label{eq:core-domination}
  \sigma_{\min}(G)\le\sigma_{\min}(Z_\ell)
  \qquad\text{almost surely}.
\end{equation}
Equivalently, for every bounded Borel function $\varphi:\R^{\ell\times\ell}\to\R$,
\[
  \E\!\left[\varphi(Z_\ell)\mid\widetilde\cF_t\right]
  =\int\varphi(z)\,\gamma_\ell(\dd z)
  \qquad\text{almost surely}.
\]
\end{lemma}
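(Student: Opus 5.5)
The plan is to pad the transcript with analysis-only queries after the estimator has returned its output, write $G$ in adapted orthonormal bases as a $2\times2$ block matrix whose top-left block is a known square pivot, and then find a fresh Gaussian square block sitting inside a Schur complement.

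\textbf{Padding to a square pivot.} After the $t$ real queries, $r_t,s_t\le t$. I append queries, chosen as measurable functions of the transcript, so that the left and right query spaces both reach a common dimension $k$. Since only $\lfloor N/8\rfloor$ queries were used, there is room to take $k\le 2t$ and to make the pivot invertible. Let $\widetilde\cF_t$ be generated by $\cF_t$ and the appended query--response pairs. Since the appended queries are still adaptive queries, \Cref{lem:posterior} applies to the extended transcript. In the frames $[Q_{\cU},Q_{\cU^\perp}]$ and $[Q_{\cV},Q_{\cV^\perp}]$ it gives
\[
  G\simeq\begin{pmatrix}A&B\\ C&Z\end{pmatrix},
\]
where $A,B,C$ are $\widetilde\cF_t$-measurable and $\mathcal L(Z\mid\widetilde\cF_t)=\gamma_{N-k}$.

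\textbf{Invertibility of the pivot.} The pivot is $A=Q_{\cU}^\top GQ_{\cV}$. I intend to choose the appended directions so that $A$ is almost surely invertible. One way is to extend by fresh random directions from the seed, using the fact that a Gaussian block conditioned on a measurable event of positive measure is absolutely continuous. If $A$ is singular on a null set, I redefine $Z_\ell$ there; this does not affect an almost-sure statement.

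\textbf{Schur complement.} The Schur complement is $S=Z-CA^{-1}B$. Since $S^{-1}$ is the bottom-right block of $G^{-1}$, we get $\sigma_{\min}(G)\le\sigma_{\min}(S)$. The shift $K=CA^{-1}B$ is $\widetilde\cF_t$-measurable with rank at most $k$.

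\textbf{Kernel restriction.} Let $P$ be a measurable isometry onto $\operatorname{ran}(C)^\perp$ (or onto $\operatorname{ran}(K)^\perp$), and let $R$ be one onto $\operatorname{ran}(B^\top)^\perp$. Truncate both to a common dimension $\ell=N-k-2t\ge N-4t$, with padding absorbed into the count. Set $Z_\ell:=P^\top ZR$. Because $P,R$ are $\widetilde\cF_t$-measurable isometries, they are independent of $Z$ given $\widetilde\cF_t$. Conditional orthogonal invariance of $\gamma_{N-k}$, applied exactly as in the proof of \Cref{lem:posterior}, then gives \eqref{eq:conditional-core-law}. On these subspaces $P^\top SR=P^\top ZR$, because the shift is annihilated.

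\textbf{Main obstacle: the domination \eqref{eq:core-domination}.} Compressing a matrix to subspaces can decrease its smallest singular value, so $\sigma_{\min}(S)\le\sigma_{\min}(P^\top SR)$ does not follow from restriction alone. My plan is to realize $Z_\ell$ itself as a Schur complement of $G$, so that the inequality $\sigma_{\min}(G)\le\sigma_{\min}(\text{Schur complement})$ applies directly.

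To do this, I would append further queries exposing the rows $P^\perp$ and columns $R^\perp$ of the hidden block; this uses the remaining budget up to $4t$. I then pass to the refined bases (rows $\cU$, $P^\perp$, $P$; columns $\cV$, $R^\perp$, $R$). Choosing $P\perp\operatorname{ran}(C)$ and $R\perp\operatorname{ran}(B^\top)$ makes the corner blocks vanish. I would then try to choose the enlarged pivot, consisting of all revealed rows and columns, so that the resulting Schur complement is exactly $Z_\ell$ plus a shift that is killed by the construction.

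If an exact cancellation is not available, the fallback is to iterate the pivot-and-restrict step. Each round costs at most the rank of the current shift, and I would check that the total loss stays within $4t$.

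I expect the bookkeeping that secures exact cancellation inside the dimension budget $\ell=N-4t$ to be the delicate step. The Gaussian-law part of the argument is routine given \Cref{lem:posterior} and the fixed measurable-frame convention.
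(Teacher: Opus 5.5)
Your outline handles the posterior law and the inequality $\sigma_{\min}(G)\le\sigma_{\min}(S)$ correctly. You also correctly see that two-sided compression $P^\top SR=P^\top ZR$ gives the Gaussian law but not \eqref{eq:core-domination}. The gap is that the fix you propose for this step cannot work.

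\textbf{Why the enlarged Schur complement fails.} Use the refined bases: rows $\cU,P^\perp,P$ and columns $\cV,R^\perp,R$. Impose $P^\top C=0$ and $BR=0$, and write $D_{ij}$ for the blocks of the hidden matrix $Z$, so that $Z_\ell=D_{22}$. The Schur complement of the enlarged pivot $\Pi=\begin{pmatrix}A&B_1\\ C_1&D_{11}\end{pmatrix}$ is then
\[
  Z_\ell-D_{21}\,\bigl(D_{11}-C_1A^{-1}B_1\bigr)^{-1}D_{12}.
\]
Here $D_{21}=P^\top ZR^\perp$ and $D_{12}=(P^\perp)^\top ZR$ are blocks of the hidden Gaussian matrix itself, so this shift is generically nonzero. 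The vanishing corner blocks do not help, because the cross terms come from $Z$, not from $B$ or $C$. You are back to a fresh Gaussian plus a measurable low-rank shift of the same form. Iterating pivot-and-restrict reproduces that form every round while consuming more dimensions. So neither exact cancellation nor your fallback closes the argument.

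\textbf{The missing idea.} Restrict only the domain, and make the complementary rows of $S$ vanish exactly rather than compressing them away.
\begin{enumerate}
  \item Since $\rank K\le k$, choose a measurable orthogonal $O$ with $OK=\binom{K_1}{0}$, where $K_1$ has $k$ rows.
  \item Split $OZ=\binom{Z_1}{W}$. Given the pivot-level sigma-field, $Z_1$ and $W$ are independent Gaussian blocks.
  \item Take $V_0$ with $\ell=N-2k$ orthonormal columns in $\ker(Z_1-K_1)$, chosen measurably in $Y:=Z_1-K_1$, and enlarge the sigma-field by $\sigma(Y)$.
  \item Then $OSV_0=\binom{0}{WV_0}$, so $\|SV_0u\|_2=\|WV_0u\|_2$ for every $u$. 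Domain restriction alone gives $\sigma_{\min}(S)\le\sigma_{\min}(WV_0)$.
  \item $Z_\ell:=WV_0$ is still conditionally $\gamma_\ell$-distributed, because $W$ is independent of $Y$.
\end{enumerate}
In your notation: expose only the rows $P^\perp$, and take $R\subseteq\ker\bigl((P^\perp)^\top S\bigr)$ instead of $R\perp\operatorname{ran}(B^\top)$. The column subspace must be allowed to depend on hidden Gaussian rows that are independent of the surviving ones. This is the paper's construction, and with pivot size $k=2t$ it gives exactly $\ell=N-4t$.

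\textbf{A secondary looseness.} Your pivot invertibility via random directions is only sketched. The paper obtains it deterministically. It takes a row space $\overline\cU\supseteq\cU+G\cV$, which is observable from the transcript and has dimension at most $2t$. This makes $M=Q_{\overline\cU}^\top G$ injective on $\cV$. It then extends $\cV$ to a $2t$-dimensional space on which $M$ is bijective.
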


\begin{proof}
Pad the algorithm with zero queries if it stops before time $t$.  If $t=0$, take $\widetilde\cF_0=\cF_0$ and $Z_N=G$; the seed is independent of $G$, so the conclusion is immediate.  Hence assume $t\ge1$ and set
\[
  q:=2t,
  \qquad
  k:=N-q.
\]
Since $t\le N/8$, we have $q\le k$ and $\ell=k-q=N-4t\ge N/2$.

Let $\cU=\cU_t$ and $\cV=\cV_t$.  The space
\[
  \widehat\cU:=\cU+G\cV
\]
is $\cF_t$-measurable, because $G\cV$ is spanned by already observed column responses.  On the probability-one event that $G$ is invertible,
\[
  \dim\widehat\cU
  \le \dim\cU+\dim\cV
  \le 2t=q.
\]
Use the measurable-frame convention to extend $\widehat\cU$ to a $q$-dimensional $\cF_t$-measurable subspace $\overline\cU$, with orthonormal frame $Q_{\overline\cU}$.  After the estimator has output, append the $q$ row-only queries
\[
  (Q_{\overline\cU}e_i,0),
  \qquad i=1,\ldots,q.
\]
They reveal the full row map
\[
  M:=Q_{\overline\cU}^\top G\in\R^{q\times N}.
\]
Because $G$ is invertible, $M$ has row rank $q$.  Moreover, $M|_{\cV}$ is injective: if $v\in\cV$ and $Mv=0$, then $Gv\in G\cV\subseteq\overline\cU$, while its coordinates in $\overline\cU$ vanish, so $Gv=0$ and hence $v=0$.

Starting from a measurable orthonormal basis of $\cV$, scan $e_1,\ldots,e_N$ and retain a vector exactly when its $M$-image increases the span of the previously retained $M$-images.  Since $M$ has rank $q$ and $M|_{\cV}$ is injective, the procedure stops with $q$ vectors.  After Gram--Schmidt they span a $q$-dimensional measurable subspace $\overline\cV\supseteq\cV$ on which $M$ is bijective.  Let $Q_{\overline\cV}$ be its selected orthonormal frame, and append the $q$ column-only queries
\[
  (0,Q_{\overline\cV}e_i),
  \qquad i=1,\ldots,q.
\]
Let $\cG_t$ be the sigma-field generated by $\cF_t$ and these $2q$ post-output query--response pairs.

Choose measurable orthogonal complements $Q_{\overline\cU^\perp}$ and $Q_{\overline\cV^\perp}$.  In the corresponding block coordinates, write the actual matrix as
\begin{equation}\label{eq:block-G}
  \begin{pmatrix}
    Q_{\overline\cU}^\top\\
    Q_{\overline\cU^\perp}^\top
  \end{pmatrix}
  G
  \begin{pmatrix}
    Q_{\overline\cV}&Q_{\overline\cV^\perp}
  \end{pmatrix}
  =
  \begin{pmatrix}
    A&B\\
    C&Z
  \end{pmatrix}.
\end{equation}
The appended row queries reveal $A,B$, and the appended column queries reveal $A,C$, so these three blocks are $\cG_t$-measurable.  Also
\[
  A=M Q_{\overline\cV}
\]
is invertible.  Applying \cref{lem:posterior} to the complete augmented transcript gives
\begin{equation}\label{eq:hidden-square-law}
  \mathcal L(Z\mid\cG_t)=\gamma_k
  \qquad\text{almost surely}.
\end{equation}

Define the Schur complement
\[
  S:=Z-CA^{-1}B.
\]
For every unit vector $z\in\R^k$, let $x_z=(-A^{-1}Bz,z)$ in the block domain of \eqref{eq:block-G}.  Then
\[
  \begin{pmatrix}A&B\\ C&Z\end{pmatrix}x_z
  =\binom{0}{Sz},
  \qquad
  \|x_z\|_2\ge1.
\]
Since the coordinate changes in \eqref{eq:block-G} are orthogonal,
\begin{equation}\label{eq:schur-smin}
  \sigma_{\min}(G)
  \le \inf_{\|z\|_2=1}\frac{\|Sz\|_2}{\|x_z\|_2}
  \le \sigma_{\min}(S).
\end{equation}

Put $F:=CA^{-1}B$.  Since $C$ has $q$ columns, $\rank(F)\le q$.  Select measurably a $q$-dimensional subspace $\mathcal L\subseteq\R^k$ containing $\operatorname{im}(F)$, and let $O\in O(k)$ be the selected orthogonal change of basis that sends $\mathcal L$ to the first $q$ coordinate directions.  Then
\[
  OF=\binom{F_1}{0},
  \qquad
  F_1\in\R^{q\times k}.
\]
Conditional on $\cG_t$, $O$ is fixed; hence \eqref{eq:hidden-square-law} and left-orthogonal invariance give
\[
  OZ=\binom{Z_1}{W},
  \qquad
  Z_1\in\R^{q\times k},
  \quad
  W\in\R^{\ell\times k},
\]
where, conditional on $\cG_t$, $Z_1$ and $W$ are independent standard Gaussian blocks.  Set
\[
  Y:=Z_1-F_1.
\]
By the measurable-frame convention, choose a $\sigma(Y)$-measurable matrix $V_0(Y)\in\R^{k\times\ell}$ with orthonormal columns contained in $\ker(Y)$.  This is always possible because
\[
  \dim\ker(Y)\ge k-q=\ell.
\]
Define
\[
  \widetilde\cF_t:=\cG_t\vee\sigma(Y),
  \qquad
  Z_\ell:=WV_0(Y).
\]
Conditional on $\widetilde\cF_t$, the matrix $V_0(Y)$ is fixed and $W$ remains an $\ell\times k$ standard Gaussian matrix.  Each row of $WV_0(Y)$ is therefore $N(0,I_\ell)$, and the rows are independent.  This proves the regular conditional identity \eqref{eq:conditional-core-law}.

Finally, $YV_0=0$ gives
\[
  OSV_0
  =O(Z-F)V_0
  =\binom{0}{WV_0}
  =\binom{0}{Z_\ell}.
\]
Since $O$ is orthogonal and $V_0$ is an isometric embedding,
\begin{align*}
  \sigma_{\min}(S)
  &=\inf_{\|x\|_2=1}\|OSx\|_2\\
  &\le\inf_{\|u\|_2=1}\|OSV_0u\|_2\\
  &=\inf_{\|u\|_2=1}\|Z_\ell u\|_2
   =\sigma_{\min}(Z_\ell).
\end{align*}
Combining this with \eqref{eq:schur-smin} proves \eqref{eq:core-domination}.  All constructions may be defined arbitrarily on the null event that $G$ is singular.
\end{proof}

\subsection{Uniform hard-edge separation}
\Cref{lem:gaussian-core} supplies a fresh square matrix of dimension at least $N/2$, regardless of the adaptive transcript.  To turn that structural fact into estimation hardness, we need a lower-tail event uniform over every such core dimension, together with an unconditional high event for the original matrix.  The following separation makes those two requirements explicit.

\begin{lemma}[Separated low and high events]\label{lem:hard-edge-separation}
There are universal constants $0<a<b$, $p_->0$, and $p_+>0$, with $b>8a$, such that for every $N$:
\begin{align}
  \inf_{\ell\in[N/2,N]}
  \Prob\!\left(\sigma_{\min}(Z_\ell)\le\frac{a}{\sqrt N}\right)
  &\ge p_-,\label{eq:low-uniform}\\
  \Prob\!\left(\sigma_{\min}(G_N)\ge\frac{b}{\sqrt N}\right)
  &\ge p_+,\label{eq:high-uniform}
\end{align}
where $Z_\ell\sim\Gin(\ell)$ and $G_N\sim\Gin(N)$.
\end{lemma}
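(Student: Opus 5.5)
Both estimates follow from \Cref{thm:hard-edge-input}. The only care needed is in choosing $a$ and $b$ so that every application has $s\in(0,1]$ and the separation $b>8a$ holds.

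\emph{Low event.} Fix $\ell\in[N/2,N]$ and take $Z_\ell\sim\Gin(\ell)$. We have $a/\sqrt N = s/\sqrt\ell$ with $s:=a\sqrt{\ell/N}$. Since $\ell/N\in[1/2,1]$, this gives $s\in[a/\sqrt2,\,a]\subseteq(0,1]$ as long as $a\le1$. The lower bound in \eqref{eq:hard-edge-input} then yields
\[
  \Prob\!\left(\sigma_{\min}(Z_\ell)\le\frac{a}{\sqrt N}\right)\ge c_{\rm he}\,a\sqrt{\ell/N}\ge \frac{c_{\rm he}a}{\sqrt2}=:p_-.
\]
The bound is uniform in $\ell$ and $N$, which gives \eqref{eq:low-uniform}. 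For very small $N$ the interval $[N/2,N]$ still contains only integers $\ell\ge1$, so nothing changes.

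\emph{High event.} Apply the upper bound in \eqref{eq:hard-edge-input} with $k=N$ and $s=b\le1$:
\[
  \Prob\!\left(\sigma_{\min}(G_N)<\frac{b}{\sqrt N}\right)\le\Prob\!\left(\sigma_{\min}(G_N)\le\frac{b}{\sqrt N}\right)\le C_{\rm he}b.
\]
Choose $b:=\min\{1,1/(2C_{\rm he})\}$. Then \eqref{eq:high-uniform} holds with $p_+=1/2$.

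\emph{Constants.} Finally set $a:=b/16$, so that $a\le1$, $0<a<b$, and $b>8a$. The constant $p_-=c_{\rm he}b/(16\sqrt2)$ is then a positive universal constant.

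The only step that needs attention is the order in which the constants are fixed. The high event forces $b$ to be small, at most of order $1/C_{\rm he}$, and $a$ must then be chosen after $b$ to get the separation. This costs nothing, because the low event holds for every $a\in(0,1]$ with probability linear in $a$, and that probability only needs to be some positive constant. There is no dependence on $N$ anywhere, since \Cref{thm:hard-edge-input} is uniform in the dimension.
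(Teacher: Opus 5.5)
Your proposal is correct and follows the paper's proof: in both, $b$ is fixed first via the upper hard-edge bound, then $a=b/16$, and the low event uses the rescaling $a/\sqrt N=s_\ell/\sqrt\ell$ with $s_\ell\in[a/\sqrt2,a]$. The only difference is cosmetic: you take $b=\min\{1,1/(2C_{\rm he})\}$ and get $p_+=1/2$, whereas the paper takes $b=\min\{1,1/(4C_{\rm he})\}$ and gets $p_+=3/4$.
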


\begin{proof}
Let $c_{\rm he},C_{\rm he}$ be from \cref{thm:hard-edge-input}.  Set
\[
  b:=\min\left\{1,\frac{1}{4C_{\rm he}}\right\},
  \qquad
  a:=\frac{b}{16}.
\]
Then \eqref{eq:hard-edge-input} gives
\[
  \Prob\!\left(\sigma_{\min}(G_N)<\frac{b}{\sqrt N}\right)
  \le C_{\rm he}b\le\frac14,
\]
so $p_+=3/4$ works.  If $\ell\in[N/2,N]$, write
\[
  \frac{a}{\sqrt N}=\frac{s_\ell}{\sqrt\ell},
  \qquad
  s_\ell=a\sqrt{\frac{\ell}{N}}\in\left[\frac{a}{\sqrt2},a\right].
\]
The lower bound in \eqref{eq:hard-edge-input} yields
\[
  \Prob\!\left(\sigma_{\min}(Z_\ell)\le\frac{a}{\sqrt N}\right)
  \ge c_{\rm he}s_\ell
  \ge\frac{c_{\rm he}a}{\sqrt2}.
\]
Thus $p_-=c_{\rm he}a/\sqrt2$ works, and $b=16a>8a$.
\end{proof}

\begin{theorem}[Adaptive Gaussian smallest-singular-value lower bound]\label{thm:spectral-estimation}
There is a universal $\gamma>0$ such that, for every $N$ and every randomized estimator that uses at most $t\le N/8$ adaptive two-sided queries to $G\sim\Gin(N)$ and returns $\widehat s\in\R$, one has
\begin{equation}\label{eq:spectral-lb}
  \Prob\!\left(
    \widehat s\notin
    \left[\frac12\sigma_{\min}(G),\,2\sigma_{\min}(G)\right]
  \right)
  \ge\gamma.
\end{equation}
\end{theorem}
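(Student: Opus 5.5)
We combine the fresh core of \Cref{lem:gaussian-core} with the separated events of \Cref{lem:hard-edge-separation}. The output $\widehat s$ is $\cF_t$-measurable, hence $\widetilde\cF_t$-measurable. The conditional law of $Z_\ell$ given $\widetilde\cF_t$ is $\gamma_\ell$ with $\ell=N-4t\in[N/2,N]$. Let $L:=\{\sigma_{\min}(Z_\ell)\le a/\sqrt N\}$ and $H:=\{\sigma_{\min}(G)\ge b/\sqrt N\}$. By \eqref{eq:core-domination}, $L\subseteq\{\sigma_{\min}(G)\le a/\sqrt N\}$. By \eqref{eq:low-uniform}, $\Prob(L\mid\widetilde\cF_t)\ge p_-$ almost surely, so $L$ has conditional probability at least $p_-$ regardless of the transcript. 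Because $b>8a$, no single value $\widehat s$ can be a factor-two estimate on both $L$ and $H$. On $L$, success requires $\widehat s\le 2a/\sqrt N$. On $H$, success requires $\widehat s\ge b/(2\sqrt N)>4a/\sqrt N$.

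Split according to the $\widetilde\cF_t$-measurable event $E:=\{\widehat s\ge b/(2\sqrt N)\}$. On $E$, the estimator fails whenever $L$ occurs. Hence
\[
\Prob(\text{fail}\cap E)\ge\E[\one_E\Prob(L\mid\widetilde\cF_t)]\ge p_-\Prob(E).
\]
On $E^c$, the estimator fails whenever $H$ occurs. This gives
\[
\Prob(\text{fail})\ge \Prob(H\cap E^c)\ge \Prob(H)-\Prob(E)\ge p_+-\Prob(E).
\]
Adding the two bounds with suitable weights gives
\[
\Prob(\text{fail})\ge\max\{p_-\Prob(E),\,p_+-\Prob(E)\}\ge \frac{p_-p_+}{1+p_-}.
\]
The last inequality follows by optimizing over $\Prob(E)\in[0,1]$. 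So $\gamma=p_-p_+/(1+p_-)$ works.

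The main delicacy is that $H$ concerns $G$ itself rather than the core, and it is not independent of the transcript. This is why the $H$-side uses only the unconditional bound \eqref{eq:high-uniform} together with the crude union bound above. Only the $L$-side uses the conditional freshness from \eqref{eq:conditional-core-law}, and that is exactly where measurability of $E$ with respect to $\widetilde\cF_t$ is needed. Two points need a check. First, the estimator's randomization is contained in $\cF_0\subseteq\widetilde\cF_t$. Second, the case $t=0$ is covered because then $Z_N=G$. Also, since $\sigma_{\min}(G)>0$ almost surely, the boundary cases of the interval in \eqref{eq:spectral-lb} do not matter.
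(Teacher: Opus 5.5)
Your proof is correct and follows the paper's argument essentially step for step: it uses the same $\widetilde\cF_t$-measurable threshold event, the conditional core bound $p_-\Prob(E)$ on one side and the unconditional bound $p_+-\Prob(E)$ on the other, and it reaches the same $\gamma=p_-p_+/(1+p_-)$. The only difference is cosmetic: you set the threshold at $b/(2\sqrt N)$ where the paper uses $b/(4\sqrt N)$, and yours works equally well because it only needs $b>4a$.
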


\begin{proof}
Include the estimator's random seed in $\cF_0$.  Apply \cref{lem:gaussian-core}; since $t\le N/8$, the resulting $\ell$ lies in $[N/2,N]$.  By \cref{lem:hard-edge-separation}, for almost every augmented transcript,
\begin{equation}\label{eq:conditional-low}
  \Prob\!\left(
    \sigma_{\min}(G)\le\frac{a}{\sqrt N}
    \ \middle|\ \widetilde\cF_t
  \right)
  \ge p_-.
\end{equation}
The implication follows because
$\sigma_{\min}(G)\le\sigma_{\min}(Z_\ell)$ under the conditional coupling.

Let
\[
  \tau:=\frac{b}{4\sqrt N},
  \qquad
  E:=\{\widehat s\ge\tau\}.
\]
Because the estimator's complete random seed is included in $\cF_0$, its output and hence $E$ are $\cF_t$-measurable, and therefore also $\widetilde\cF_t$-measurable.
On $E\cap\{\sigma_{\min}(G)\le a/\sqrt N\}$, the estimate exceeds $2\sigma_{\min}(G)$ because $b=16a$.  Hence, by \eqref{eq:conditional-low},
\[
  \Prob(\mathrm{fail})\ge p_-\Prob(E).
\]
On $E^c\cap\{\sigma_{\min}(G)\ge b/\sqrt N\}$, the estimate is below $\sigma_{\min}(G)/2$, so
\[
  \Prob(\mathrm{fail})
  \ge p_+-\Prob(E).
\]
Minimizing the maximum of these two lower bounds over $q=\Prob(E)\in[0,1]$ gives
\[
  \Prob(\mathrm{fail})
  \ge
  \frac{p_-p_+}{1+p_-}
  =:\gamma>0.
\]
\end{proof}

\section{From Residual Solving to Inverse-Norm Estimation}\label{sec:solve}

\Cref{thm:spectral-estimation} rules out highly reliable estimation after a sublinear number of Gaussian queries.  We now show why a uniformly fast residual solver would violate that result.  Residual verification first amplifies reliability, refinement converts a constant contraction into an accurate inverse application, and approximate inverse power estimates the inverse norm.

\subsection{Symmetric indefinite systems}
For $d\ge 1$ and $\kappa\ge 1$, define
\[
  \cH_{d,\kappa}
  :=\left\{
    H=H^\top\in\R^{d\times d}:
    \sigma_{\min}(H)\ge 1,
    \ \|H\|_{\op}\le\kappa
  \right\}.
\]
Equivalently, the spectrum lies in $[-\kappa,-1]\cup[1,\kappa]$.

\begin{definition}[Constant-residual solver]
A randomized $(K,3/5)$ constant-residual solver, given $H\in\cH_{d,\kappa}$ and $r\in\R^d$, uses at most $K$ queries to $z\mapsto Hz$ and returns a vector $x$ such that
\[
  \Prob\!\left(\|Hx-r\|_2\le\frac12\|r\|_2\right)\ge\frac35.
\]
\end{definition}

\subsection{Verified amplification and iterative refinement}

\begin{lemma}[Verified amplification]\label{lem:amplify}
Suppose a base solver has success probability at least $3/5$ and uses at most $K$ matvecs.  By running it independently $R$ times, using one extra matvec per candidate to compute its exact residual, and selecting the candidate of minimum residual, the failure probability is at most $(2/5)^R$ and the total query count is at most $R(K+1)$.
\end{lemma}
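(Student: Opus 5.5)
The plan is to make the selection step deterministic given the candidates. Once that holds, failure forces every independent run to fail, and the probability bound follows from independence. The query count is bookkeeping.

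Fix $H\in\cH_{d,\kappa}$ and $r$. Run the base solver $R$ times with independent seeds, producing candidates $x^{(1)},\dots,x^{(R)}$. For each candidate we spend one extra matvec to compute $Hx^{(i)}$, and hence the exact residual $\rho_i:=\|Hx^{(i)}-r\|_2$; this is computable because $r$ is known. We output $x^{(i^*)}$ with $i^*\in\arg\min_i\rho_i$, breaking ties by a fixed rule so the output is measurable. The total number of matvecs is at most $RK+R=R(K+1)$ on every transcript, so the almost-sure hard cap convention is respected.

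For the failure probability, call run $i$ \emph{good} if $\rho_i\le\tfrac12\|r\|_2$. If at least one run is good, then $\rho_{i^*}\le\rho_i\le\tfrac12\|r\|_2$, so the selected candidate succeeds. Hence failure of the amplified procedure implies that all $R$ runs are bad. The runs use independent seeds and the fixed input $(H,r)$, so the events ``run $i$ is bad'' are independent, each with probability at most $2/5$. Therefore the failure probability is at most $(2/5)^R$.

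The only delicate point is independence. The runs must be independent given the input, which is why each run uses fresh randomness and neither the queries nor the outputs of one run depend on another. If the procedure is later applied to adaptively generated right-hand sides, as in the refinement stage, one would condition on the past transcript. The same argument then gives the conditional bound $(2/5)^R$ almost surely, since each run's success guarantee holds for every fixed $(H,r)$. I expect no real obstacle beyond stating this conditional version carefully.
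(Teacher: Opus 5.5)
Your proposal is correct and follows the paper's argument: if any run succeeds, the minimum-residual candidate succeeds, so failure requires all $R$ independent runs to fail, which happens with probability at most $(2/5)^R$, and the query count is $R(K+1)$. Your added remarks on tie-breaking and on the conditional version for adaptive right-hand sides match how the paper later applies the lemma in the proof of \cref{thm:solve-lb}.
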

\begin{proof}
If at least one run succeeds, the candidate with minimum verified residual succeeds.  All runs fail with probability at most $(2/5)^R$.
\end{proof}

\begin{lemma}[Accurate inverse application by refinement]\label{lem:refinement}
Let $H\in\cH_{d,\kappa}$ and $M=H^{-1}$, $L=\|M\|_{\op}$.  Suppose every amplified correction succeeds.  Then, after
\[
  J=\left\lceil\log_2(1/\eta)\right\rceil
\]
corrections, one can return $\widetilde x$ satisfying
\begin{equation}\label{eq:inverse-application}
  \|\widetilde x-Mr\|_2
  \le \eta L\|r\|_2.
\end{equation}
\end{lemma}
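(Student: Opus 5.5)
We run standard iterative refinement, calling the amplified solver of \Cref{lem:amplify} on the current residual at each step. Set $x_0:=0$ and $r_0:=r$. At step $i=0,1,\ldots,J-1$ we call the amplified solver on the system $(H,r_i)$ and obtain a correction $c_i$. Since the correction succeeds by hypothesis, it satisfies $\|Hc_i-r_i\|_2\le\frac12\|r_i\|_2$. We then update
\[
  x_{i+1}:=x_i+c_i,
  \qquad
  r_{i+1}:=r-Hx_{i+1}=r_i-Hc_i.
\]
The exact residual $r_{i+1}$ is available, since computing it costs one matvec, and the verification step already computes it. We output $\widetilde x:=x_J$.

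The key step is the contraction of the residual. By induction we get $\|r_{i+1}\|_2=\|r_i-Hc_i\|_2\le\frac12\|r_i\|_2$, and therefore
\[
  \|r_J\|_2\le 2^{-J}\|r\|_2\le\eta\|r\|_2,
\]
using the choice $J=\lceil\log_2(1/\eta)\rceil$. This holds even though each right-hand side $r_i$ depends adaptively on the earlier randomness. The hypothesis "every amplified correction succeeds" is a statement about the realized outputs, so no independence across steps is needed. Any probabilistic accounting, such as a union bound giving failure at most $J(2/5)^R$, belongs to the later use of the lemma and not here.

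To pass from residual to error, note that $Hx_J=r-r_J$. Hence
\[
  \widetilde x-Mr=M(Hx_J-r)=-Mr_J,
\]
and so
\[
  \|\widetilde x-Mr\|_2\le\|M\|_{\op}\|r_J\|_2\le\eta L\|r\|_2,
\]
which is \eqref{eq:inverse-application}.

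There is no genuine obstacle. The only point needing care is that the residual $r_i$ fed to the solver is computed exactly from $H$ and $x_i$, rather than propagated from approximate quantities. Otherwise errors would compound and the contraction would fail. Two degenerate cases also need a remark. If $r_i=0$ at some step, we stop early, which only helps. If $\eta\ge1$, then $J=0$, $\widetilde x=0$, and the bound $\|Mr\|_2\le L\|r\|_2$ holds trivially.
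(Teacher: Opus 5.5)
Your proposal is correct and follows the paper's proof essentially verbatim. Both run the same refinement recursion $x_{j+1}=x_j+\Delta_j$, $r_{j+1}=r_j-H\Delta_j$ (reusing the verification product and stopping early if $r_j=0$), use the halving $\|r_J\|_2\le 2^{-J}\|r\|_2\le\eta\|r\|_2$, and conclude via the identity $\widetilde x-Mr=-Mr_J$.
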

\begin{proof}
Initialize $x_0=0$ and $r_0=r$.  At stage $j$, obtain $\Delta_j$ with
\[
  \|H\Delta_j-r_j\|_2\le\frac12\|r_j\|_2,
\]
and set
\[
  x_{j+1}=x_j+\Delta_j,
  \qquad
  r_{j+1}=r_j-H\Delta_j.
\]
The product $H\Delta_j$ is already available from candidate verification and is reused in the update.  If $r_j=0$, return the current $x_j$ immediately.  Then $\|r_J\|_2\le 2^{-J}\|r\|_2\le\eta\|r\|_2$, while
\[
  \widetilde x-Mr=x_J-Mr=-M r_J.
\]
Taking norms yields \eqref{eq:inverse-application}.
\end{proof}

\subsection{Approximate inverse power}

\begin{lemma}[Inverse-norm estimation from accurate inverse applications]\label{lem:inverse-power}
Let $d\ge2$, let $H=H^\top\in\R^{d\times d}$ be invertible, and set $M=H^{-1}$ and $L=\|M\|_{\op}$.  Fix universal constants $\nu,p_\nu$ from \cref{lem:random-start}.  There are universal constants $C_1,C_2$ such that the following holds.

Take
\[
  q=\left\lceil C_1\log(d+2)\right\rceil,
  \qquad
  \eta=\frac{\nu}{C_2q\sqrt d}.
\]
Use a constant number $S=O(1)$ of independent Haar starts $g^{(s)}$, set $z_0^{(s)}:=g^{(s)}$, and for each start recursively compute $q$ approximate inverse applications satisfying \eqref{eq:inverse-application} with input $r=z_j^{(s)}$.  If every application succeeds and at least one start obeys
$|\langle g^{(s)},v_*\rangle|\ge\nu/\sqrt d$ for a top absolute-eigenvalue eigenvector $v_*$ of $M$, then
\[
  \widehat L:=\max_{s\le S}\|z_q^{(s)}\|_2^{1/q}
\]
satisfies
\begin{equation}\label{eq:L-factor-two}
  \frac12L\le\widehat L\le2L.
\end{equation}
\end{lemma}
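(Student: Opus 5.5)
We will compare the approximate iterates $z_j^{(s)}$ with the exact inverse-power iterates $w_j^{(s)}:=M^jg^{(s)}$, and show that the relative error stays $O(1)$ for all $q$ steps. The per-step error must therefore be charged against $\|w_j\|_2$, not against $L^j$.

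\textbf{Exact iterates.} Since $M$ is symmetric, write $g=\sum_i c_iv_i$ in an orthonormal eigenbasis with eigenvalues $\mu_i$, where $|\mu_1|=L$. For the good start we have $|c_1|\ge\nu/\sqrt d$. This gives the two-sided bound
\[
  L^j\frac{\nu}{\sqrt d}\le\|M^jg\|_2\le L^j\|g\|_2=L^j .
\]

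\textbf{Error propagation.} Write $z_{j+1}=Mz_j+e_j$, where by \eqref{eq:inverse-application} $\|e_j\|_2\le\eta L\|z_j\|_2$. We will prove by induction that
\[
  \|z_j-M^jg\|_2\le \frac{\nu}{2\sqrt d}L^j
  \qquad\text{and}\qquad
  \|z_j\|_2\le 2L^j .
\]
Unrolling the recursion gives
\[
  z_j-M^jg=\sum_{i<j}M^{j-1-i}e_i ,
\]
so
\[
  \|z_j-M^jg\|_2\le\sum_{i<j}L^{j-1-i}\,\eta L\cdot 2L^i=2j\eta L^j .
\]
With $\eta=\nu/(C_2q\sqrt d)$ and $C_2\ge4$, this is at most $\tfrac{\nu}{2\sqrt d}L^j$ for $j\le q$. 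That in turn keeps $\|z_j\|_2\le(1+\tfrac{\nu}{2\sqrt d})L^j\le2L^j$, which closes the induction.

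\textbf{The good start.} Combining the lower bound from the first step with this error bound gives
\[
  \|z_q\|_2\ge\frac{\nu}{2\sqrt d}L^q .
\]

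\textbf{Every start.} The same induction needs no overlap assumption, so every start satisfies $\|z_q^{(s)}\|_2\le2L^q$.

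\textbf{Taking $q$-th roots.} For the maximum over starts we get
\[
  \widehat L\le 2^{1/q}L\le2L ,
\]
and
\[
  \widehat L\ge L\Bigl(\frac{\nu}{2\sqrt d}\Bigr)^{1/q}.
\]
The last factor is at least $1/2$ once $q\ge\log_2(2\sqrt d/\nu)$. This holds for $q=\lceil C_1\log(d+2)\rceil$ with a suitable universal $C_1$, using $d\ge2$ and $\nu=1/2$. Both inequalities in \eqref{eq:L-factor-two} follow.

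The delicate step is the error propagation. A naive absolute bound with $\|e_j\|_2\le\eta L\|z_j\|_2$ could compound multiplicatively to $(1+\eta)^q$ times $L^q$, and the induction must keep $\|z_j\|_2$ tied to $L^j$ to avoid that. Because the error is relative to $\|z_j\|_2$, the constant $2$ in the inductive hypothesis is what closes the argument; the factor $1/q$ in $\eta$ is exactly what absorbs the $j$ accumulated terms. Adaptivity of the inputs $r=z_j$ is harmless, since \eqref{eq:inverse-application} is assumed to hold for each actual input on the success event.
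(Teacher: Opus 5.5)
Your proof is correct and follows the paper's argument. You compare $z_j$ with $M^jg$, bound the accumulated error by $O(q\eta)L^j$, use $\|M^qg\|_2\ge\frac{\nu}{\sqrt d}L^q$ for the good start and $\|z_q\|_2\le 2L^q$ for every start, and then take $q$-th roots.

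The only difference is bookkeeping. The paper tracks $a_j=e_j/L^j$ through $a_{j+1}\le\eta+(1+\eta)a_j$ and bounds $(1+\eta)^q-1\le 2q\eta$ directly, so the multiplicative compounding you flag is actually harmless once $q\eta\le1$. You instead unroll the error sum and close an induction on $\|z_j\|_2\le 2L^j$.
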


\begin{proof}
For one start, set $e_j=\|z_j-M^jg\|_2$ and $a_j=e_j/L^j$.  The application error obeys
\[
  e_{j+1}
  \le \eta L\|z_j\|_2+Le_j
  \le \eta L^{j+1}+(1+\eta)Le_j,
\]
so
\[
  a_{j+1}\le\eta+(1+\eta)a_j,
  \qquad
  a_0=0.
\]
Thus
\[
  a_q\le(1+\eta)^q-1\le 2q\eta
\]
when $q\eta\le1$.  Taking $C_2$ large enough gives
\[
  e_q\le\frac{\nu}{4\sqrt d}L^q.
\]
For a good start,
\[
  \|M^qg\|_2\ge\frac{\nu}{\sqrt d}L^q,
\]
and therefore
\[
  \|z_q\|_2\ge\frac{3\nu}{4\sqrt d}L^q.
\]
For every start,
\[
  \|z_q\|_2\le L^q+e_q\le2L^q.
\]
Choose $C_1$ so that
\[
  \left(\frac{3\nu}{4\sqrt d}\right)^{1/q}\ge\frac12.
\]
Taking $q$-th roots and then the maximum over starts proves \eqref{eq:L-factor-two}.
\end{proof}

\subsection{Linear-dimensional randomized solve lower bound}

\begin{theorem}[Ordinary-matvec lower bound for indefinite solving]\label{thm:solve-lb}
There are universal constants $c,C>0$ such that, for every sufficiently large $N$, with
\[
  d=2N,
  \qquad
  \kappa=CN,
\]
any randomized constant-residual solver for $\cH_{d,\kappa}$ must use
\begin{equation}\label{eq:solve-lb}
  K\ge
  c\frac{N}{\log^2(N+2)\,\log\log(N+e^e)}
\end{equation}
matvec queries in the worst case.
\end{theorem}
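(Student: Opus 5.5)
We argue by contradiction, turning a fast solver into a spectral estimator that \Cref{thm:spectral-estimation} forbids. Let $G\sim\Gin(N)$ and form the symmetric dilation
\[
  H_G:=\frac{\sqrt N}{b}\begin{pmatrix}0&G\\ G^\top&0\end{pmatrix}\in\R^{2N\times 2N}.
\]
One matvec with $H_G$ at $(u,v)$ costs exactly one two-sided query $(Gv,G^\top u)$. The singular values of $H_G$ are $\tfrac{\sqrt N}{b}\sigma_i(G)$, each with multiplicity two, so $\sigma_{\min}(H_G)=\tfrac{\sqrt N}{b}\sigma_{\min}(G)$ and $\|H_G\|_{\op}=\tfrac{\sqrt N}{b}\|G\|_{\op}$.

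The matrix $H_G$ lies in $\cH_{2N,\kappa}$ only on a good event. We need $\sigma_{\min}(G)\ge b/\sqrt N$, which fails with probability at most $1/4$ by the choice of $b$ in \Cref{lem:hard-edge-separation}. We also need $\|G\|_{\op}\le C(\delta)\sqrt N$, which by \Cref{prop:gaussian-opnorm} fails with probability at most a small $\delta$; we take $C=C(\delta)/b$. Off this event the solver carries no guarantee, but it still stops within $K$ queries by the hard-cap convention.

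\textbf{Step 1: build the estimator.} Run the spectral estimator as follows.
\begin{enumerate}[label=(\roman*)]
  \item Draw $S=O(1)$ Haar starts.
  \item For each start, perform $q=O(\log N)$ approximate inverse applications as in \Cref{lem:inverse-power}, with accuracy $\eta=\Theta(1/(q\sqrt N))$.
  \item Implement each application by \Cref{lem:refinement}, using $J=\lceil\log_2(1/\eta)\rceil=O(\log N)$ corrections.
  \item Amplify each correction by \Cref{lem:amplify} with $R$ repetitions.
\end{enumerate}
The total number of corrections is $SqJ=O(\log^2N)$. A union bound then requires $(2/5)^R\cdot O(\log^2N)\le\gamma/10$, so $R=O(\log\log N)$ suffices. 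Verification makes each correction's success independent of adaptivity: a candidate is accepted only by its checked residual, and every fresh run succeeds with probability at least $3/5$ given the past. Adaptive right-hand sides are therefore harmless.

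The estimator returns $\widehat s:=\tfrac{\sqrt N}{b}\big/\widehat L$. This is an estimate of $\sigma_{\min}(G)$, because $L=\|H_G^{-1}\|_{\op}=b/(\sqrt N\sigma_{\min}(G))$. The total query count is
\[
  t\le SqJ\,R(K+1)=O\bigl(K\log^2N\log\log N\bigr).
\]

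\textbf{Step 2: failure probability on the good event.} On the good event, failure of $\widehat s$ has three sources:
\begin{enumerate}[label=(\roman*)]
  \item some amplified correction fails, with probability at most $\gamma/10$;
  \item no Haar start is aligned with the top eigenvector, with probability at most $(1-p_\nu)^S\le\gamma/10$ for a suitable $S=O(1)$;
  \item otherwise \eqref{eq:L-factor-two} holds and gives $\widehat s\in[\tfrac12\sigma_{\min}(G),2\sigma_{\min}(G)]$.
\end{enumerate}
The Haar starts come from the seed, which is independent of $G$, so \Cref{lem:random-start} applies for the fixed eigenvector. Outside the good event we simply concede failure.

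\textbf{Main obstacle: reconciling the good event with \Cref{thm:spectral-estimation}.} That theorem gives failure at least $\gamma$ overall, but $\Prob(\sigma_{\min}(G)<b/\sqrt N)$ is a constant near $1/4$, not small. So conceding failure off the good event breaks the contradiction. The fix is to reopen the proof of \Cref{thm:spectral-estimation} rather than cite its final constant.
\begin{enumerate}[label=(\roman*)]
  \item That proof only needs the event $E=\{\widehat s\ge\tau\}$ to be $\cF_t$-measurable and the conditional low bound \eqref{eq:conditional-low}; both hold for our estimator.
  \item On the high event $\{\sigma_{\min}(G)\ge b/\sqrt N\}\cap\{\|G\|_{\op}\le C(\delta)\sqrt N\}$, where $H_G\in\cH_{2N,\kappa}$, our estimator succeeds with conditional probability at least $1-\gamma/5$. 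In particular $\widehat s\ge\sigma_{\min}(G)/2\ge\tau$, so $E$ occurs.
  \item Hence $\Prob(E)\ge p_+-\delta-\gamma/5$, a constant.
  \item But $E$ is $\widetilde\cF_t$-measurable, so \eqref{eq:conditional-low} yields $\Prob(\mathrm{fail}\cap E\cap\{\sigma_{\min}\le a/\sqrt N\})\ge p_-\Prob(E)$. On this set $\sigma_{\min}(G)<b/\sqrt N$, so $G$ lies outside the high event.
\end{enumerate}
This gives no immediate contradiction, since failing off the high event is permitted. To close the argument, we modify the dilation scaling to $\sqrt N/a'$ with $a'\le a$ chosen small, so that the promised class contains most of the mass, including the set $\{\sigma_{\min}\in[a'/\sqrt N,a/\sqrt N]\}$. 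On that set the estimator succeeds with high conditional probability, contradicting \eqref{eq:conditional-low} in its measure-level form $\E[\one_E\one_{\{\sigma_{\min}\le a/\sqrt N\}}]\ge p_-\Prob(E)$.

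Concretely, \Cref{thm:hard-edge-input} gives that $\Prob(\sigma_{\min}(G)<a'/\sqrt N)$ is at most $C_{\rm he}a'$. Once $C_{\rm he}a'$ is much smaller than $p_-\,p_+/2$, the mass on which the estimator fails while $E$ holds and $\sigma_{\min}(G)\le a/\sqrt N$ is bounded by $C_{\rm he}a'+\delta+\gamma/5$. This is less than $p_-\Prob(E)$, which is a contradiction. Taking $\kappa=C(\delta)/a'\cdot N=CN$ then forces $t>N/8$, that is $K\ge cN/(\log^2(N+2)\log\log(N+e^e))$.

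I expect this constant bookkeeping, namely making the promised class capture enough of the low-singular-value mass, to be the delicate step.
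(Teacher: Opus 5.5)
Your proposal is correct and follows the paper's proof. Both use the dilation of $G$, inverse power implemented by verified refinement with $O(\log^2N\log\log N)$ overhead, and, as you eventually conclude, a scaling constant $a'$ small enough that $\cH_{2N,\kappa}$ carries all but a small constant of the Gaussian mass; the paper's $a_*$ plays exactly this role. Once $a'$ and $\delta$ are that small, your estimator's total failure probability is at most $C_{\rm he}a'+\delta+\gamma/5<\gamma$, so you can cite \Cref{thm:spectral-estimation} directly instead of reopening its proof. Separately, the estimate should read $\widehat s=a'/(\sqrt N\,\widehat L)$ rather than the inverted scaling factor you wrote, with a default value when $\widehat L=0$.
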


\begin{proof}
Assume for contradiction that a solver with smaller $K$ exists.  By the hard-cap convention following the query-complexity definition, we run a version that is stopped after $K$ matvecs and returns zero if it has not terminated; this changes nothing on promised inputs.  We use it to contradict \cref{thm:spectral-estimation}.

Given two-sided oracle access to $G\sim\Gin(N)$, form the symmetric dilation
\[
  \mathcal D(G):=
  \begin{pmatrix}
    0&G\\
    G^\top&0
  \end{pmatrix}.
\]
A query to $\mathcal D(G)$ at $(u,v)\in\R^N\times\R^N$ returns
\[
  \mathcal D(G)\binom{u}{v}
  =\binom{Gv}{G^\top u},
\]
which is exactly one two-sided query to $G$.

Let $\gamma$ be from \cref{thm:spectral-estimation}.  By the upper bound in \cref{thm:hard-edge-input}, choose a sufficiently small universal $a_*>0$ so that
\[
  \Prob\!\left(\sigma_{\min}(G)<\frac{a_*}{\sqrt N}\right)
  \le\frac{\gamma}{64}.
\]
By \cref{prop:gaussian-opnorm}, choose a universal $C_*>0$ such that
\[
  \Prob(\|G\|_{\op}>C_*\sqrt N)
  \le\frac{\gamma}{64}.
\]
Define
\[
  H:=\frac{\sqrt N}{a_*}\mathcal D(G),
  \qquad
  \kappa:=\frac{C_*}{a_*}N.
\]
On the event
\[
  \mathcal E_N:=\left\{
    \sigma_{\min}(G)\ge\frac{a_*}{\sqrt N},
    \ \|G\|_{\op}\le C_*\sqrt N
  \right\},
\]
we have $H\in\cH_{2N,\kappa}$, and $\Prob(\mathcal E_N^c)\le\gamma/32$.

Let $M=H^{-1}$ and $L=\|M\|_{\op}$.  Use \cref{lem:inverse-power}, implementing each accurate inverse application by \cref{lem:refinement} and each correction by the amplified procedure of \cref{lem:amplify}.  Let $q=O(\log N)$ be the inverse-power length and let $\eta=\Theta(1/(\sqrt N\log N))$ be its required inverse-application accuracy.  Each accurate application uses
\[
  J=O(\log(1/\eta))=O(\log N)
\]
correction steps.  Across the constant number of starts, the number of amplified calls is
\[
  M_{\rm calls}=O(\log^2 N).
\]
Choose
\[
  R=O(\log(M_{\rm calls}/\gamma))=O(\log\log N)
\]
independent base-solver repetitions per correction.  Condition on $G$ with $\mathcal E_N$ holding and on the complete history before any particular correction.  Its right-hand side is then fixed, and fresh independent solver seeds make the amplified correction fail with probability at most $(2/5)^R$.  Applying the tower property and a union bound over the adaptively generated correction calls, all corrections succeed with probability at least $1-\gamma/32$ conditioned on $\mathcal E_N$.  By \cref{lem:random-start}, a sufficiently large universal constant number of starts gives a good start with probability at least $1-\gamma/32$.

Consequently, with total failure probability at most
\[
  \Prob(\mathcal E_N^c)+\frac{\gamma}{32}+\frac{\gamma}{32}
  \le\frac{3\gamma}{32}<\gamma,
\]
we obtain $\widehat L\in[L/2,2L]$.  Since
\[
  L=\frac{a_*}{\sqrt N\,\sigma_{\min}(G)},
\]
the output
\[
  \widehat\sigma:=
  \begin{cases}
    a_*/(\sqrt N\,\widehat L),&\widehat L>0,\\
    0,&\widehat L=0
  \end{cases}
\]
is a factor-two estimate on success and has a defined value on every transcript.

The total number of two-sided Gaussian queries is at most
\[
  C'(K+1)\log^2(N+2)\,\log\log(N+e^e).
\]
If the constant in the assumed upper bound on $K$ is sufficiently small, this is less than $N/8$, contradicting \cref{thm:spectral-estimation}.  Hence \eqref{eq:solve-lb} holds.
\end{proof}

\section{From Linear Systems to Matrix Games}\label{sec:embedding}

Throughout this section fix $d\ge2$, $H\in\cH_{d,\kappa}$, and a unit right-hand side $b\in\R^d$.  The two constructions address different geometric obstacles: the ball permits a direct homogeneous representation of the solution, whereas simplex strategies require a signed dictionary.  In each case we verify admissibility, recover a constant-residual solution from a small full gap, and simulate one game query with two source matvecs.  The construction uses only matvec access to $H$.

\subsection[A direct l2--l1 embedding]{A direct $\ell_2$--$\ell_1$ embedding}\label{subsec:l21-embedding}
Set
\begin{equation}\label{eq:l21-alpha-beta}
  \beta:=\frac1{4\kappa},
  \qquad
  \alpha:=\frac{\beta}{8\sqrt d}.
\end{equation}
For $i\in[d]$ and $\sigma\in\{-1,+1\}$ define a row vector in $\R^{d+1}$ by
\begin{equation}\label{eq:l21-C-row}
  C_{(i,\sigma),:}
  :=\alpha e_{d+1}^\top
    +\sigma\beta\,(e_i^\top H,-b_i),
\end{equation}
and set
\begin{equation}\label{eq:l21-A}
  A^{(2,1)}:=-C\in\R^{2d\times(d+1)}.
\end{equation}
Write a minimization vector as $w=(z,s)\in\R^d\times\R$.

\begin{lemma}[Row-norm normalization]\label{lem:l21-row-norm}
Every row of $A^{(2,1)}$ has Euclidean norm strictly less than one.  Hence $A^{(2,1)}$ is an admissible $\ell_2$--$\ell_1$ payoff matrix.
\end{lemma}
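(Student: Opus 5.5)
We bound each row norm directly with the triangle inequality. Since multiplying by $-1$ does not change norms, it suffices to bound the rows of $C$. Fix $(i,\sigma)$. The row $C_{(i,\sigma),:}$ is the sum of $\alpha e_{d+1}^\top$ and $\sigma\beta(e_i^\top H,-b_i)$. Hence
\[
  \|C_{(i,\sigma),:}\|_2
  \le \alpha+\beta\,\bigl\|(e_i^\top H,-b_i)\bigr\|_2
  \le \alpha+\beta\sqrt{\|e_i^\top H\|_2^2+b_i^2}.
\]

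Next we bound the two pieces inside the square root. The $i$-th row of $H$ satisfies $\|e_i^\top H\|_2=\|H^\top e_i\|_2\le\|H\|_{\op}\le\kappa$, because $H\in\cH_{d,\kappa}$. Since $b$ is a unit vector, $|b_i|\le1$. We also have $\kappa\ge1$. Therefore
\[
  \sqrt{\|e_i^\top H\|_2^2+b_i^2}\le\sqrt{\kappa^2+1}\le\sqrt2\,\kappa.
\]
By \eqref{eq:l21-alpha-beta}, $\beta=1/(4\kappa)$, so the $\beta$-term is at most $\sqrt2/4$. Using $d\ge2$, the other term satisfies $\alpha=\beta/(8\sqrt d)\le 1/(32\sqrt2)$.

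Adding the two bounds gives a row norm of at most $\sqrt2/4+1/(32\sqrt2)\approx0.376<1$, which is strict. This is exactly the constraint $\|A^{(2,1)}\|_{2\to\infty}<1$, so $A^{(2,1)}$ is admissible for the $\ell_2$--$\ell_1$ model.

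There is no real obstacle here. The only care needed is to bound the row of $H$ using the operator-norm part of the definition of $\cH_{d,\kappa}$, and not the entrywise size of $H$.
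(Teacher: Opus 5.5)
Your proposal is correct and follows essentially the same route as the paper: the triangle inequality splits off the $\alpha e_{d+1}^\top$ term, and the remaining term is bounded by $\beta\sqrt{\kappa^2+1}\le\sqrt2/4$ using $\|H\|_{\op}\le\kappa$, $|b_i|\le1$, and $\kappa\ge1$. The only difference is that the paper uses the cruder bound $\alpha\le1/32$ where you use $d\ge2$ to get $\alpha\le 1/(32\sqrt2)$; either choice gives a row norm well below one.
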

\begin{proof}
Using $\|He_i\|_2\le\kappa$, $|b_i|\le1$, and $\kappa\ge1$,
\[
  \|C_{(i,\sigma),:}\|_2
  \le\alpha+\beta\sqrt{\kappa^2+1}
  \le\frac1{32}+\frac{\sqrt2}{4}<1.
\]
Changing the sign of the matrix does not change row norms.
\end{proof}

Define
\begin{equation}\label{eq:l21-psi}
  \psi(z,s)
  :=\min_{i,\sigma}C_{(i,\sigma),:}(z,s)
  =\alpha s-\beta\|Hz-sb\|_\infty.
\end{equation}
Since $A^{(2,1)}=-C$, the primal objective of the matrix game is
\begin{equation}\label{eq:l21-primal}
  F(z,s)
  :=\max_{y\in\Delta_{2d}}y^\top A^{(2,1)}(z,s)
  =-\psi(z,s).
\end{equation}

\begin{lemma}[A positive-margin witness]\label{lem:l21-witness}
Let $z_*=H^{-1}b$ and
\[
  s_*:=\frac1{\sqrt{1+\|z_*\|_2^2}},
  \qquad
  \bar z_*:=s_*z_*.
\]
Then $(\bar z_*,s_*)\in\mathbb B_2^{d+1}$ and
\[
  \psi(\bar z_*,s_*)=\alpha s_*\ge\frac\alpha{\sqrt2}.
\]
Consequently, if $v_*$ denotes the value of the game with payoff $A^{(2,1)}$, then $v_*\le-\alpha/\sqrt2$.
\end{lemma}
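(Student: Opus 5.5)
The argument is a direct computation in three steps. First, feasibility: $\|(\bar z_*,s_*)\|_2^2=s_*^2\|z_*\|_2^2+s_*^2=s_*^2(1+\|z_*\|_2^2)=1$, so the pair lies on the unit sphere and in particular in $\mathbb B_2^{d+1}$.

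Second, evaluate $\psi$ using the closed form \eqref{eq:l21-psi}. Since $Hz_*=b$, we get $H\bar z_*-s_*b=s_*(Hz_*-b)=0$. The $\ell_\infty$ penalty therefore vanishes, and $\psi(\bar z_*,s_*)=\alpha s_*$. Before using \eqref{eq:l21-psi}, I would briefly confirm it from \eqref{eq:l21-C-row}: each row evaluates to $\alpha s+\sigma\beta(Hz-sb)_i$, and minimizing over $\sigma\in\{\pm1\}$ and $i$ gives $\alpha s-\beta\|Hz-sb\|_\infty$.

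Third, the lower bound on $s_*$ is the only place the hypotheses on $H$ and $b$ enter. Because $H\in\cH_{d,\kappa}$ has $\sigma_{\min}(H)\ge1$, we have $\|H^{-1}\|_{\op}\le1$. Since $b$ is a unit vector, $\|z_*\|_2\le1$, hence $1+\|z_*\|_2^2\le2$ and $s_*\ge1/\sqrt2$. This yields $\psi(\bar z_*,s_*)\ge\alpha/\sqrt2$.

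For the value bound, the game minimizes over $w\in\mathbb B_2^{d+1}$ and maximizes over $y\in\Delta_{2d}$. By \eqref{eq:l21-primal}, $v_*=\min_{w}F(w)\le F(\bar z_*,s_*)=-\psi(\bar z_*,s_*)\le-\alpha/\sqrt2$. This uses only that the min-max value is at most the primal objective at any feasible point, so no minimax theorem is needed, although one holds here anyway because both domains are compact and convex.

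None of these steps is a real obstacle. The one point to get right is the sign convention: $A^{(2,1)}=-C$, so the primal objective is $-\psi$ rather than $\psi$, and a positive margin for $\psi$ becomes a negative upper bound on the value.
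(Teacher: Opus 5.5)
Your proof is correct and follows the paper's argument step for step. You show the witness has unit norm, that $H\bar z_*-s_*b=0$ kills the $\ell_\infty$ term, that $\|z_*\|_2\le1$ follows from $\sigma_{\min}(H)\ge1$ and $\|b\|_2=1$, and that $v_*\le F(\bar z_*,s_*)=-\alpha s_*$. Your extra check of the closed form for $\psi$ and of the sign convention $A^{(2,1)}=-C$ is a harmless addition.
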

\begin{proof}
Because $\sigma_{\min}(H)\ge1$ and $\|b\|_2=1$, we have $\|z_*\|_2\le1$.  By construction, $\|(\bar z_*,s_*)\|_2=1$ and
\[
  H\bar z_*-s_*b=0.
\]
Equation \eqref{eq:l21-psi} gives the claimed value, and minimizing \eqref{eq:l21-primal} yields the value bound.
\end{proof}

\begin{lemma}[Small full gap yields a constant-residual solution]\label{lem:l21-small-gap}
Let $(w,y)=((z,s),y)\in\mathbb B_2^{d+1}\times\Delta_{2d}$.  If
\begin{equation}\label{eq:l21-gap-threshold}
  \Gap_{A^{(2,1)}}^{2,1}(w,y)\le\frac\alpha{16},
\end{equation}
then
\[
  s\ge\frac1{\sqrt2}-\frac1{16}>0
\]
and the vector $\widehat z:=z/s$ satisfies
\begin{equation}\label{eq:l21-residual}
  \|H\widehat z-b\|_2<\frac12.
\end{equation}
\end{lemma}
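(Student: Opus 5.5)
The plan is to compare the full gap with the game value $v_*$, using the witness from \cref{lem:l21-witness}. Once that comparison gives a quantitative lower bound on $\psi(w)$, both conclusions follow by reading off the two terms of \eqref{eq:l21-psi}: the positive term $\alpha s$ and the penalty $\beta\|Hz-sb\|_\infty$.

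\textbf{Step 1: from the gap to a bound on $\psi(w)$.} By \eqref{eq:l21-gap} and \eqref{eq:l21-primal},
\[
\Gap^{2,1}_{A^{(2,1)}}(w,y)=F(w)-\min_{w'\in\mathbb B_2^{d+1}}y^\top A^{(2,1)}w'.
\]
Weak duality gives $\min_{w'}y^\top A^{(2,1)}w'\le v_*$, and \cref{lem:l21-witness} gives $v_*\le-\alpha/\sqrt2$. Therefore
\[
\Gap\ge F(w)-v_*\ge -\psi(w)+\frac{\alpha}{\sqrt2}.
\]
Combining this with \eqref{eq:l21-gap-threshold} yields
\[
\psi(w)\ge\alpha\Bigl(\frac1{\sqrt2}-\frac1{16}\Bigr).
\]
Here $\psi(w)$ is the expression in \eqref{eq:l21-psi}, namely $\alpha s-\beta\|Hz-sb\|_\infty$.

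\textbf{Step 2: lower bound on $s$.} Since $\beta\|Hz-sb\|_\infty\ge0$, we have $\alpha s\ge\psi(w)$. Dividing by $\alpha$ gives $s\ge 1/\sqrt2-1/16\approx0.645>0$, so $\widehat z=z/s$ is well defined.

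\textbf{Step 3: bound on the $\ell_\infty$ residual.} Because $w\in\mathbb B_2^{d+1}$, we have $s\le1$. Hence
\[
\beta\|Hz-sb\|_\infty=\alpha s-\psi(w)\le\alpha\Bigl(1-\frac1{\sqrt2}+\frac1{16}\Bigr)\le0.36\,\alpha.
\]
With $\alpha=\beta/(8\sqrt d)$ from \eqref{eq:l21-alpha-beta}, this becomes $\|Hz-sb\|_\infty\le 0.36/(8\sqrt d)$.

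\textbf{Step 4: pass to the Euclidean residual.} Using $\|\cdot\|_2\le\sqrt d\,\|\cdot\|_\infty$, we get $\|Hz-sb\|_2\le 0.045$. Since $H\widehat z-b=(Hz-sb)/s$, Step 2 gives
\[
\|H\widehat z-b\|_2\le 0.045/0.645<1/2,
\]
which is \eqref{eq:l21-residual}.

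The only conceptual point is Step 1: the full gap must be bounded below by the primal suboptimality $F(w)-v_*$ alone, discarding the dual term through weak duality. Everything after that is arithmetic with the constants fixed in \eqref{eq:l21-alpha-beta}. The choice $\alpha=\beta/(8\sqrt d)$ is exactly what absorbs the $\sqrt d$ loss in the $\ell_\infty$-to-$\ell_2$ conversion of Step 4.
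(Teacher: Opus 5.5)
Your proposal is correct and follows the paper's proof step for step. You bound the full gap below by the primal suboptimality $F(w)-v_*$, use the witness bound $v_*\le-\alpha/\sqrt2$ to get $\psi(w)\ge\alpha(1/\sqrt2-1/16)$, read off $s$ from the nonnegative penalty, use $s\le1$ to bound the $\ell_\infty$ residual, and absorb the $\sqrt d$ loss via $\alpha=\beta/(8\sqrt d)$. One cosmetic point: $1/\sqrt2-1/16\approx0.6446$ is slightly below $0.645$, so you should divide by $0.64$ as the paper does, which changes nothing.
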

\begin{proof}
For every $y$, the dual value $\min_{w'\in\mathbb B_2^{d+1}}y^\top A^{(2,1)}w'$ is at most the game value $v_*$.  Hence the full gap dominates primal suboptimality:
\[
  \Gap_{A^{(2,1)}}^{2,1}(w,y)
  \ge F(w)-v_*.
\]
By \cref{lem:l21-witness} and \eqref{eq:l21-primal}, \eqref{eq:l21-gap-threshold} implies
\begin{equation}\label{eq:l21-psi-lower}
  \psi(z,s)\ge\frac\alpha{\sqrt2}-\frac\alpha{16}.
\end{equation}
Because the residual term in \eqref{eq:l21-psi} is nonnegative, \eqref{eq:l21-psi-lower} gives
\[
  s\ge\frac1{\sqrt2}-\frac1{16}=:s_0>0.
\]
Since $|s|\le1$ for $w\in\mathbb B_2^{d+1}$, equations \eqref{eq:l21-psi} and \eqref{eq:l21-psi-lower} also give
\[
  \beta\|Hz-sb\|_\infty
  \le\alpha\left(1-\frac1{\sqrt2}+\frac1{16}\right).
\]
Therefore, using \eqref{eq:l21-alpha-beta},
\[
  \|Hz-sb\|_2
  \le\sqrt d\,\|Hz-sb\|_\infty
  \le\frac18\left(1-\frac1{\sqrt2}+\frac1{16}\right)
  <0.045.
\]
Dividing by $s\ge s_0>0.64$ gives \eqref{eq:l21-residual}.
\end{proof}

\begin{lemma}[Exact $\ell_2$--$\ell_1$ oracle simulation]\label{lem:l21-oracle-simulation}
For arbitrary $w=(z,s)\in\R^{d+1}$,
\begin{equation}\label{eq:l21-sim-forward}
  Cw
  =\alpha s\one_{2d}
   +\beta
   \begin{pmatrix}
     Hz-sb\\
     -(Hz-sb)
   \end{pmatrix}.
\end{equation}
For arbitrary $y=(y^+,y^-)\in\R^{2d}$, let $q:=y^+-y^-$.  Then
\begin{equation}\label{eq:l21-sim-transpose}
  C^\top y
  =\begin{pmatrix}
      \beta Hq\\
      \alpha\one^\top y-\beta b^\top q
    \end{pmatrix}.
\end{equation}
Thus one two-sided query to $A^{(2,1)}=-C$ is simulated by two ordinary queries to $H$.
\end{lemma}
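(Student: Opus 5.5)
The plan is a direct entrywise computation from the row definition \eqref{eq:l21-C-row}, treating the two identities separately. Then we count the source matvecs each one needs.

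For \eqref{eq:l21-sim-forward}, fix $w=(z,s)\in\R^{d+1}$ and a row index $(i,\sigma)$. By \eqref{eq:l21-C-row},
\[
  C_{(i,\sigma),:}w=\alpha s+\sigma\beta\,(e_i^\top Hz-b_is)=\alpha s+\sigma\beta\,(Hz-sb)_i .
\]
Order the $2d$ rows as the block with $\sigma=+1$ followed by the block with $\sigma=-1$; this is the same ordering implicit in $y=(y^+,y^-)$. Stacking the entries then gives \eqref{eq:l21-sim-forward} at once.

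For \eqref{eq:l21-sim-transpose}, I will write $C^\top y=\sum_{i,\sigma}y_{(i,\sigma)}C_{(i,\sigma),:}^\top$. Each row transposes to
\[
  \alpha e_{d+1}+\sigma\beta\binom{H^\top e_i}{-b_i}.
\]
Summing the $\alpha$-parts gives $\alpha(\one^\top y)e_{d+1}$. Summing the $\beta$-parts gives $\beta\sum_i q_i\binom{H^\top e_i}{-b_i}=\beta\binom{H^\top q}{-b^\top q}$, where $q=y^+-y^-$. Since $H\in\cH_{d,\kappa}$ is symmetric, $H^\top q=Hq$, which is exactly \eqref{eq:l21-sim-transpose}.

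Query count: one query to $A^{(2,1)}=-C$ at $(w,y)$ needs $Cw$ and $C^\top y$, up to an overall sign. The first requires only the single matvec $Hz$, combined with the known vector $b$ and scalars $\alpha,\beta,s$. The second requires only the single matvec $Hq$, together with the inner products $\one^\top y$ and $b^\top q$, which cost no queries. So two ordinary queries to $H$ suffice.

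The only point needing care is the symmetry step $H^\top=H$. It is what lets an ordinary matvec oracle for $H$ supply the transpose product, and it holds because the source class $\cH_{d,\kappa}$ consists of symmetric matrices. Everything else is routine bookkeeping; in particular the formulas hold for arbitrary real $w,y$, not just feasible ones, which is the generality the query model requires.
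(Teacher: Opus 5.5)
Your proposal is correct and matches the paper's proof, which simply expands the row definition \eqref{eq:l21-C-row} and uses $H=H^\top$ to obtain the transpose identity. Your entrywise computation, the ordering of the $\sigma=\pm1$ blocks, and the count of one matvec each for $Hz$ and $Hq$ are that same argument written out in full.
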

\begin{proof}
Both identities follow by expanding \eqref{eq:l21-C-row} and using $H=H^\top$.
\end{proof}

\begin{proof}[Proof of the $\ell_2$--$\ell_1$ part of \cref{thm:main-sequences}]
Take $d=2N$ and $\kappa=CN$ as in \cref{thm:solve-lb}.  Suppose a randomized $\ell_2$--$\ell_1$ algorithm uses $T$ two-sided queries and, for every admissible $2d\times(d+1)$ payoff matrix, returns full gap at most $\alpha/16$ with probability at least $2/3$.

For an arbitrary source pair $H\in\cH_{d,\kappa}$ and right-hand side $b$, return zero immediately when $b=0$.  Otherwise normalize $b_0=b/\|b\|_2$, construct $A^{(2,1)}(H,b_0)$, and simulate every game query by \cref{lem:l21-oracle-simulation}.  If the game output is $((z,s),y)$, return
\[
  \widehat z_{\rm src}:=
  \begin{cases}
    \|b\|_2z/s,&s>0,\\
    0,&s\le0.
  \end{cases}
\]
On the game-success event, \cref{lem:l21-small-gap} ensures $s>0$ and
\[
  \|H\widehat z_{\rm src}-b\|_2
  <\frac12\|b\|_2.
\]
Hence we obtain a source solver with success probability at least $2/3>3/5$ and at most $2T$ ordinary matvecs.  By \cref{thm:solve-lb},
\[
  2T\ge c\frac{N}{\log^2(N+2)\,\log\log(N+e^e)}.
\]
Finally, with $\kappa=CN$ and $d=2N$,
\[
  \frac\alpha{16}
  =\frac{1}{512C\sqrt2}\,N^{-3/2}.
\]
Thus part (a) holds with $c_0=1/(512C\sqrt2)$ and with $c_2$ chosen no larger than half the source constant in \cref{thm:solve-lb}.
\end{proof}

\subsection[A bounded l1--l1 embedding]{A bounded $\ell_1$--$\ell_1$ embedding}\label{subsec:l11-embedding}
The homogeneous ball construction does not directly produce simplex strategies.  Instead, one signed dictionary represents every unit-ball solution as a convex combination, while a second dictionary tests every residual direction.  Randomizing only the testing basis spreads the matrix entries enough to satisfy the entrywise bound, with the logarithmic cost recorded below.
Let $Q\in O(d)$ be Haar-uniform and define
\[
  R:=\sqrt d\,[Q,-Q]\in\R^{d\times2d},
  \qquad
  V:=\sqrt d\,[I,-I]\in\R^{d\times2d}.
\]
Set
\begin{equation}\label{eq:l11-A-embedding}
  A^{(1,1)}=A^{(1,1)}(H,b,Q)
  :=\frac1\Lambda
  \left(R^\top HV-(R^\top b)\one^\top\right)
  \in\R^{2d\times2d},
\end{equation}
where
\begin{equation}\label{eq:l11-Lambda}
  \Lambda:=C_{\rm emb}\kappa\sqrt{d\log(d+2)}
\end{equation}
and $C_{\rm emb}$ is a sufficiently large universal constant.

\begin{lemma}[Entrywise boundedness with constant probability]\label{lem:l11-bounded-entries}
For every fixed $H\in\cH_{d,\kappa}$ and unit $b$, if $C_{\rm emb}$ is large enough then
\[
  \Prob_Q\bigl(\|A^{(1,1)}(H,b,Q)\|_{\max}\le1\bigr)\ge0.99.
\]
\end{lemma}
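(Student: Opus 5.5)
We write out the entries explicitly and reduce everything to the Haar tail bound of \cref{lem:haar-tail}, followed by a union bound. Let $q_k$ denote the $k$-th column of $Q$. Up to the signs coming from $[Q,-Q]$ and $[I,-I]$, every entry of $R^\top HV-(R^\top b)\one^\top$ has the form
\[
  \pm d\,\langle q_k,He_i\rangle\;\mp\;\sqrt d\,\langle q_k,b\rangle,
  \qquad k,i\in[d].
\]
This holds because $R^\top HV$ has entries $\pm\sqrt d\cdot\sqrt d\,q_k^\top He_i$, while $(R^\top b)\one^\top$ has entries $\pm\sqrt d\,q_k^\top b$. It therefore suffices to show that, with probability at least $0.99$, simultaneously for all $k,i$,
\[
  d\,|\langle q_k,He_i\rangle|\le \tfrac{\Lambda}{2}
  \qquad\text{and}\qquad
  \sqrt d\,|\langle q_k,b\rangle|\le\tfrac{\Lambda}{2}.
\]

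Each column $q_k$ of a Haar orthogonal matrix is Haar-uniform on $\mathbb S^{d-1}$, and the vectors $He_i$ and $b$ are fixed. We apply \cref{lem:haar-tail} with $a=He_i$, using $\|He_i\|_2\le\kappa$, and choose
\[
  t=\sqrt{\frac{2\log(100\cdot 4d^2)}{d-1}}.
\]
This gives
\[
  \Prob\bigl(|\langle q_k,He_i\rangle|>t\kappa\bigr)\le\frac{2}{400d^2}.
\]
If $t>1$, the event is empty, and the bound $|\langle q_k,He_i\rangle|\le\kappa$ holds deterministically with $\kappa\le t\kappa$, so nothing is lost. Since $t\le C\sqrt{\log(d+2)/d}$ for $d\ge2$, on the complement we have
\[
  d\,|\langle q_k,He_i\rangle|\le C\kappa\sqrt{d\log(d+2)},
\]
which is at most $\Lambda/2$ once $C_{\rm emb}\ge2C$. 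The same tail bound applied to $a=b$ gives $\sqrt d\,|\langle q_k,b\rangle|\le C\sqrt{\log(d+2)}$. This is far below $\Lambda/2$, because $\kappa\ge1$ and $\sqrt d\ge1$; in fact the trivial bound $\sqrt d$ already suffices here.

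A union bound over the $d^2$ pairs $(k,i)$ and the $d$ indices $k$ bounds the total failure probability by
\[
  \frac{d^2+d}{200d^2}\le0.01.
\]
We only need the marginal law of each column, so independence between columns is never used. The one step that needs care is the bookkeeping of the $\sqrt d$ factors. The inner product $\langle q_k,He_i\rangle$ is typically of order $\kappa/\sqrt d$, which after multiplication by $d$ gives order $\kappa\sqrt d$, and the tail adds the $\sqrt{\log d}$ factor. This is exactly the normalization $\Lambda$ in \eqref{eq:l11-Lambda}. We should also check the small-$d$ regime, where $d-1$ may be comparable to $d$ or $t$ may exceed $1$; there the trivial bound $|\langle q_k,He_i\rangle|\le\kappa$ still fits under $\Lambda/2$ provided $C_{\rm emb}$ is large.
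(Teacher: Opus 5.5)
Your proposal is correct and follows essentially the paper's argument: you reduce each entry to the terms $d\,q_k^\top He_i$ and $\sqrt d\,q_k^\top b$, apply the Haar tail bound of \cref{lem:haar-tail} at a threshold of order $\sqrt{\log(d+2)/d}$, and take a union bound that uses only the column marginals. One small simplification over the paper, which bounds the $b$-term probabilistically as well, is your observation that this term is already bounded deterministically by $\sqrt d\le\Lambda/2$.
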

\begin{proof}
Let $q_i$ be a column of $Q$.  Each $q_i$ is marginally Haar-uniform on $\mathbb S^{d-1}$; independence of the columns is not needed for a union bound.  Up to signs, the bilinear entries of $R^\top HV$ equal $d\,q_i^\top He_j$.  Fix a sufficiently large universal $C$ and put
\[
  t_d:=C\sqrt{\frac{\log(d+2)}{d}}.
\]
When $t_d\le1$, \cref{lem:haar-tail} and $(d-1)/d\ge1/2$ give, for each pair $(i,j)$,
\[
  \Prob\!\left(|q_i^\top He_j|>t_d\kappa\right)
  \le 2(d+2)^{-C^2/4}.
\]
When $t_d>1$, the same event is impossible because $|q_i^\top He_j|\le\|He_j\|_2\le\kappa$.  Choosing $C$ so that $2d^2(d+2)^{-C^2/4}\le0.005$ for every $d\ge2$ and taking a union bound yields
\[
  \max_{i,j}d|q_i^\top He_j|
  \le C\kappa\sqrt{d\log(d+2)}
\]
with probability at least $0.995$.  The identical argument with $He_j$ replaced by the unit vector $b$ and a union bound over $i$ gives
\[
  \max_i\sqrt d|q_i^\top b|
  \le C\sqrt{\log(d+2)}
\]
with probability at least $0.995$.  A final union bound gives probability at least $0.99$ for both displays.  Since $\kappa\ge1$ and $d\ge2$, the sum of the two entrywise contributions is at most $C'\kappa\sqrt{d\log(d+2)}$ for a universal $C'$, and is absorbed by \eqref{eq:l11-Lambda} once $C_{\rm emb}\ge C'$.
\end{proof}

\begin{lemma}[Simplex witness]\label{lem:l11-simplex-witness}
There exists $x_*\in\Delta_{2d}$ such that
\[
  Vx_*=H^{-1}b
  \qquad\text{and}\qquad
  A^{(1,1)}x_*=0.
\]
\end{lemma}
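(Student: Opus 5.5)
Let $z_*:=H^{-1}b$.  The plan is to represent $z_*/\sqrt d$ as a convex combination of the columns of $[I,-I]$, padded so the total mass is one.

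First, $\|z_*\|_2\le\|H^{-1}\|_{\op}\|b\|_2\le1$ because $\sigma_{\min}(H)\ge1$ and $\|b\|_2=1$.  Hence $\|z_*\|_1\le\sqrt d\,\|z_*\|_2\le\sqrt d$.

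Write $z_*=z^+-z^-$ with $z^\pm\ge0$ the positive and negative parts, so that $\one^\top(z^++z^-)=\|z_*\|_1\le\sqrt d$.  Set
\[
  \delta:=\frac{1}{2d}\Bigl(1-\frac{\|z_*\|_1}{\sqrt d}\Bigr)\ge0,
  \qquad
  x_*:=\frac1{\sqrt d}\binom{z^+}{z^-}+\delta\,\one_{2d}.
\]
This vector is entrywise nonnegative.  Its total mass is $\|z_*\|_1/\sqrt d+2d\delta=1$, so $x_*\in\Delta_{2d}$.  The padding $\delta\one_{2d}$ places equal mass on $e_i$ and $-e_i$ in the signed dictionary, so it cancels under $V$:
\[
  Vx_*=\sqrt d\,[I,-I]x_*
  =\sqrt d\Bigl(\frac{z^+-z^-}{\sqrt d}+\delta\one-\delta\one\Bigr)
  =z_*.
\]

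It remains to check that $A^{(1,1)}x_*=0$.  From \eqref{eq:l11-A-embedding} and $\one^\top x_*=1$,
\[
  \Lambda A^{(1,1)}x_*=R^\top HVx_*-(R^\top b)(\one^\top x_*)=R^\top(Hz_*-b)=0.
\]

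No step is a real obstacle.  The only point needing care is that $\|z_*\|_1$ may be strictly less than $\sqrt d$, so mass must be added; adding it symmetrically on the $\pm$ halves keeps $Vx_*$ unchanged while restoring $\one^\top x_*=1$.  The normalization $\sqrt d$ in $V$ is exactly what makes $\ell_2$-norm at most one fit inside the scaled cross-polytope.
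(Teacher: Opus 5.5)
Your proof is correct and follows the same argument as the paper. Both bound $\|H^{-1}b\|_1\le\sqrt d$, realize $H^{-1}b$ as a convex combination of the columns $\pm\sqrt d\,e_j$ of $V$, and use $\one^\top x_*=1$ to conclude $A^{(1,1)}x_*=0$. The only difference is that you write the convex weights explicitly, using the symmetric padding $\delta\one_{2d}$, whereas the paper cites the fact that this convex hull is the $\ell_1$ ball of radius $\sqrt d$.
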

\begin{proof}
Since $\|H^{-1}b\|_2\le1$, we have $\|H^{-1}b\|_1\le\sqrt d$.  The convex hull of the columns $\{\pm\sqrt d\,e_j\}_{j=1}^d$ of $V$ is the $\ell_1$ ball of radius $\sqrt d$.  Hence $H^{-1}b=Vx_*$ for some $x_*\in\Delta_{2d}$.  Since $\one^\top x_*=1$, equation \eqref{eq:l11-A-embedding} gives $A^{(1,1)}x_*=0$.
\end{proof}

\begin{lemma}[The full $\ell_1$--$\ell_1$ gap controls the residual]\label{lem:l11-residual-gap}
For every $(x,y)\in\Delta_{2d}\times\Delta_{2d}$,
\begin{equation}\label{eq:l11-residual-gap}
  \Gap_{A^{(1,1)}}^{1,1}(x,y)
  \ge\frac1\Lambda\|HVx-b\|_2.
\end{equation}
\end{lemma}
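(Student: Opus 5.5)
We plan to bound only the primal part of the gap by the residual and to show that the dual part is nonnegative relative to zero. Write the gap \eqref{eq:l11-gap} as
\[
  \Gap_{A^{(1,1)}}^{1,1}(x,y)
  =\Bigl[\max_i(A^{(1,1)}x)_i-v_*\Bigr]
   +\Bigl[v_*-\min_j((A^{(1,1)})^\top y)_j\Bigr].
\]
Here $v_*$ is the game value. Both brackets are nonnegative.

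\textbf{Dual bracket.} By \cref{lem:l11-simplex-witness} there is $x_*\in\Delta_{2d}$ with $A^{(1,1)}x_*=0$. Hence $v_*\le\max_i(A^{(1,1)}x_*)_i=0$. For any $y\in\Delta_{2d}$, we have $\min_j((A^{(1,1)})^\top y)_j\le y^\top A^{(1,1)}x_*=0$. So the dual term $-\min_j((A^{(1,1)})^\top y)_j$ is at least $0$. It therefore suffices to show
\[
  \max_i(A^{(1,1)}x)_i\ge\frac1\Lambda\|HVx-b\|_2 .
\]
This is a purely primal statement, with no role for $y$ beyond the step just given.

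\textbf{Primal bound.} Since $\one^\top x=1$, \eqref{eq:l11-A-embedding} gives
\[
  A^{(1,1)}x=\frac1\Lambda R^\top(HVx-b)=\frac{\sqrt d}{\Lambda}\binom{Q^\top r}{-Q^\top r},
  \qquad r:=HVx-b.
\]
The maximum entry is therefore $\frac{\sqrt d}{\Lambda}\|Q^\top r\|_\infty$, because the signed dictionary $[Q,-Q]$ contains both signs of each coordinate. As $Q$ is orthogonal, $\|Q^\top r\|_2=\|r\|_2$, and hence $\|Q^\top r\|_\infty\ge\|r\|_2/\sqrt d$. Combining these,
\[
  \max_i(A^{(1,1)}x)_i\ge\frac1\Lambda\|r\|_2,
\]
which is \eqref{eq:l11-residual-gap}.

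\textbf{Where care is needed.} The only step requiring attention is recognizing that the $\sqrt d$ scaling in $R$ exactly cancels the $\ell_\infty$-to-$\ell_2$ loss. The other point is confirming that the dual term is handled through the witness $x_*$ rather than estimated directly. Neither step depends on the randomness of $Q$ or on \cref{lem:l11-bounded-entries}, so the inequality holds deterministically for every orthogonal $Q$.
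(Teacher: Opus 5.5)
Your proof is correct and follows essentially the same route as the paper. The witness $x_*$ from \cref{lem:l11-simplex-witness} makes $-\min_j((A^{(1,1)})^\top y)_j\ge0$, and the signed orthonormal dictionary gives $\max_i(A^{(1,1)}x)_i=\frac{\sqrt d}{\Lambda}\|Q^\top r\|_\infty\ge\frac1\Lambda\|r\|_2$. The detour through the game value $v_*$ is harmless but never used; the paper simply substitutes these two bounds into \eqref{eq:l11-gap}.
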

\begin{proof}
Because $R$ contains both signs of the orthonormal basis $Q$,
\[
  \max_i(R^\top z)_i
  =\sqrt d\,\|Q^\top z\|_\infty
  \ge\|z\|_2.
\]
For $x\in\Delta_{2d}$,
\[
  A^{(1,1)}x=\frac1\Lambda R^\top(HVx-b),
\]
so $\max_i(A^{(1,1)}x)_i\ge\Lambda^{-1}\|HVx-b\|_2$.  By \cref{lem:l11-simplex-witness},
\[
  \min_j((A^{(1,1)})^\top y)_j
  \le x_*^\top(A^{(1,1)})^\top y=0.
\]
Substitution into \eqref{eq:l11-gap} proves the claim.
\end{proof}

\begin{lemma}[Exact $\ell_1$--$\ell_1$ oracle simulation]\label{lem:l11-oracle-simulation}
For arbitrary real query vectors $x,y\in\R^{2d}$,
\begin{align}
  A^{(1,1)}x
  &=\frac1\Lambda R^\top\left(H(Vx)-b\,\one^\top x\right),\label{eq:l11-sim-Ax}\\
  (A^{(1,1)})^\top y
  &=\frac1\Lambda\left(V^\top H(Ry)-\one\,b^\top Ry\right).\label{eq:l11-sim-ATy}
\end{align}
Thus one two-sided query to $A^{(1,1)}$ is simulated by two ordinary queries to $H$.
\end{lemma}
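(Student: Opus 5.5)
The plan is a direct expansion of the definition \eqref{eq:l11-A-embedding}, handled as two separate identities, one per side of the query.

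For the forward product, I write $A^{(1,1)}x=\Lambda^{-1}\bigl(R^\top HVx-(R^\top b)(\one^\top x)\bigr)$. Since $\one^\top x$ is a scalar, I can pull $R^\top$ out of both terms. This gives $\Lambda^{-1}R^\top\bigl(H(Vx)-b\,\one^\top x\bigr)$, which is \eqref{eq:l11-sim-Ax}.

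For the transpose, I use $(R^\top HV)^\top=V^\top H^\top R=V^\top HR$, which holds because $H\in\cH_{d,\kappa}$ is symmetric. I also use $\bigl((R^\top b)\one^\top\bigr)^\top=\one\,b^\top R$. Applying both to $y$ yields $\Lambda^{-1}\bigl(V^\top H(Ry)-\one\,b^\top Ry\bigr)$, which is \eqref{eq:l11-sim-ATy}.

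For the query count, I note that $R$, $V$, $b$, $Q$ and $\Lambda$ are all known to the simulator, so the vectors $Vx$ and $Ry$ can be formed with no oracle access. The only calls to $H$ are the two products $H(Vx)$ and $H(Ry)$, that is, two ordinary matvecs. Every remaining operation (multiplying by $R^\top$ or $V^\top$, the inner products $\one^\top x$ and $b^\top Ry$, the rank-one corrections, and the rescaling by $1/\Lambda$) is explicit arithmetic. Nothing in the argument requires $x,y$ to lie in the simplex, so the identities hold for arbitrary real queries, as the statement requires.

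There is essentially no obstacle here. The only point to check is that symmetry of $H$ is invoked in the transpose step, since without it the second product would need a query to $H^\top$ instead of $H$.
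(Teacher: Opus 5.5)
Your proposal is correct and follows the same route as the paper, whose proof is simply to expand \eqref{eq:l11-A-embedding} and use $H=H^\top$ in the transpose identity. Your accounting of which products require oracle access, namely only $H(Vx)$ and $H(Ry)$, matches the simulation claim.
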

\begin{proof}
Expand \eqref{eq:l11-A-embedding} and use $H=H^\top$.
\end{proof}

\begin{proof}[Proof of the $\ell_1$--$\ell_1$ part of \cref{thm:main-sequences}]
Take $d=2N$ and $\kappa=CN$ as in \cref{thm:solve-lb}.  Suppose a randomized $\ell_1$--$\ell_1$ algorithm uses $T$ two-sided queries and, for every bounded $2d\times2d$ payoff matrix, returns full gap at most $1/(2\Lambda)$ with probability at least $2/3$.

For an arbitrary source pair $H\in\cH_{d,\kappa}$ and right-hand side $b$, return zero immediately when $b=0$.  Otherwise normalize $b_0=b/\|b\|_2$, sample $Q$ Haar-uniformly independently of the game algorithm's random seed, and run the game algorithm on the implicit matrix $A^{(1,1)}(H,b_0,Q)$, simulating each game query using \cref{lem:l11-oracle-simulation}.  Output
\[
  \widehat z:=\|b\|_2V\widehat x.
\]
For every fixed $H,b_0$, \cref{lem:l11-bounded-entries} gives a good-$Q$ event of probability at least $0.99$.  For each fixed $Q$ in this event, the game matrix is admissible and the game algorithm succeeds with probability at least $2/3$.  On success, \cref{lem:l11-residual-gap} yields
\[
  \|H(V\widehat x)-b_0\|_2\le\frac12.
\]
Thus the derived source solver succeeds with probability at least $0.99\cdot2/3>3/5$ and uses at most $2T$ matvecs.  By \cref{thm:solve-lb},
\[
  2T\ge c\frac{N}{\log^2(N+2)\,\log\log(N+e^e)}.
\]
Finally, with $d=2N$ and $\kappa=CN$,
\[
  \frac1{2\Lambda}
  =\frac{1}{2\sqrt2 C_{\rm emb}C}
    \frac{1}{N^{3/2}\sqrt{\log(2N+2)}}.
\]
Since $\log(2N+2)\le4\log(N+2)$, part (b) holds, for example, with $c_1=1/(4\sqrt2 C_{\rm emb}C)$ and with the same $c_2$ as in part (a).
\end{proof}

\begin{proof}[Proof of \cref{cor:rates}]
Both hard accuracy sequences are eventually strictly decreasing and satisfy $\eps_{N+1}/\eps_N\to1$.  Given sufficiently small $\eps$, choose $N$ so that
\[
  \eps_{N+1}<\eps\le\eps_N.
\]
An algorithm achieving gap at most $\eps$ would also achieve the weaker target $\eps_N$, so monotonicity of worst-case query complexity transfers the lower bound at $\eps_N$ to $\eps$.  The consecutive-ratio property ensures that this selected $N$ has the same asymptotic inversion as the equality $\eps=\eps_N$.

For the $\ell_2$--$\ell_1$ sequence, $\eps_N^{(2,1)}=\Theta(N^{-3/2})$, hence $N=\Theta(\eps^{-2/3})$ and the source lower bound gives \eqref{eq:l21-rate}.  For the $\ell_1$--$\ell_1$ sequence,
\[
  \eps_N^{(1,1)}
  =\Theta\!\left(N^{-3/2}(\log N)^{-1/2}\right),
  \qquad
  \eps^{-2/3}=\Theta\!\left(N(\log N)^{1/3}\right),
\]
which gives \eqref{eq:l11-rate}.  In both cases $\log N=\Theta(\log(1/\eps))$.  The displayed dimensions are polynomial in $1/\eps$, and \cref{lem:dimension-extension} extends each lower bound to all larger dimensions of the same geometry.
\end{proof}

\section{Scope and Remaining Questions}\label{sec:scope}
The lower bounds concern randomized worst-case query complexity with exact two-sided matrix-vector access.  For every randomized algorithm below the stated query threshold, the argument forces the existence of a fixed admissible matrix on which its success probability is less than $2/3$.  The instance may depend on the algorithm, but not on its realized private seed.  The auxiliary Haar rotation in the simplex--simplex embedding is used inside a reduction: a solver that succeeded on every admissible game would yield a source solver that succeeds on every promised linear system.  This averaging argument does not construct a universal Yao distribution over games.

The dimension regime is essential.  Any $m\times n$ matrix can be reconstructed using at most $\min\{m,n\}$ two-sided basis queries, after which the equilibrium can be computed without further oracle access.  Thus the lower bounds do not hold uniformly in fixed small dimensions.  \Cref{thm:main-sequences} identifies the hard sizes, and \Cref{lem:dimension-extension} extends the result by exact simulations.  Determining the full joint dependence on $(m,n,\eps)$ remains beyond these constructions.

All charged work is visible in the reductions.  One symmetric-dilation matvec costs one two-sided Gaussian query; one game query costs two ordinary source matvecs; and verification costs one source matvec per candidate.  The verification product is reused in refinement.  There are $O(\log N)$ inverse-power steps and $O(\log N)$ refinement stages per step, while confidence amplification contributes $O(\log\log N)$.  These account for the source denominator $\log^2N\log\log N$.  The signed dictionary adds the $\sqrt{\log N}$ accuracy loss responsible for the additional $\log^{1/3}(1/\eps)$ factor in the simplex--simplex lower bound.  Closing these logarithmic gaps requires sharper reductions.

The oracle model permits unlimited exact arithmetic on returned data.  The results therefore concern information obtained from matvecs, without asserting corresponding lower bounds on runtime, memory, communication, or parallel depth.  The Gaussian posterior and core extraction use exact conditional laws, so the proof does not infer hidden information merely from small numerical coefficients.  Its central unresolved verification task is the adaptive core argument; the probabilistic input estimates are established results with cited sources.  Subject to proof verification, the reductions establish the necessity of the $2/3$ exponent for randomized algorithms.


\clearpage
\begin{thebibliography}{99}

\bibitem{KOS26}
I.~Karmarkar, L.~O'Carroll, and A.~Sidford.
\newblock Solving matrix games with near-optimal matvec complexity.
\newblock arXiv:2601.02347v3, June 2026.
\newblock \url{https://arxiv.org/abs/2601.02347v3}.

\bibitem{KOS25}
I.~Karmarkar, L.~O'Carroll, and A.~Sidford.
\newblock Solving zero-sum games with fewer matrix-vector products.
\newblock In \emph{66th Annual IEEE Symposium on Foundations of Computer Science (FOCS)}, 2025.
\newblock Full version: arXiv:2509.04426v2, June 2026.
\newblock \url{https://arxiv.org/abs/2509.04426v2}.

\bibitem{KS26}
G.~Kornowski and O.~Shamir.
\newblock The oracle complexity of simplex-based matrix games.
\newblock arXiv:2412.06990v4, February 2026.
\newblock \url{https://arxiv.org/abs/2412.06990v4}.

\bibitem{BHSW20}
M.~Braverman, E.~Hazan, M.~Simchowitz, and B.~Woodworth.
\newblock The gradient complexity of linear regression.
\newblock In \emph{Proceedings of the Thirty Third Conference on Learning Theory}, volume 125 of \emph{Proceedings of Machine Learning Research}, pages 627--647, 2020.
\newblock \url{https://proceedings.mlr.press/v125/braverman20a.html}.

\bibitem{Nemirovski04}
A.~Nemirovski.
\newblock Prox-method with rate of convergence $O(1/t)$ for variational inequalities with Lipschitz continuous monotone operators and smooth convex-concave saddle point problems.
\newblock \emph{SIAM Journal on Optimization}, 15(1):229--251, 2004.
\newblock \url{https://doi.org/10.1137/S1052623403425629}.

\bibitem{Nesterov05}
Y.~Nesterov.
\newblock Smooth minimization of non-smooth functions.
\newblock \emph{Mathematical Programming}, 103:127--152, 2005.
\newblock \url{https://doi.org/10.1007/s10107-004-0552-5}.

\bibitem{HuangTikhomirov}
H.~Huang and K.~Tikhomirov.
\newblock A remark on the smallest singular value of powers of Gaussian matrices.
\newblock arXiv:1910.03702v3, January 2020.
\newblock \url{https://arxiv.org/abs/1910.03702v3}.

\bibitem{Vershynin}
R.~Vershynin.
\newblock \emph{High-Dimensional Probability: An Introduction with Applications in Data Science}.
\newblock Cambridge University Press, 2018.

\end{thebibliography}
\end{document}